\def\margin_comment#1{\marginpar{\sffamily{\tiny #1\par}\normalfont}}
\newtheorem{thm}{Theorem}[section]
\numberwithin{equation}{section} %% Comment out for sequentially-numbered
\numberwithin{figure}{section} %% Comment out for sequentially-numbered
\theoremstyle{plain}
\newtheorem*{thm*}{Theorem}
\theoremstyle{definition}
\theoremstyle{plain}
\newtheorem*{defn*}{Definition}
\newtheorem*{ques}{Question}
\theoremstyle{plain}
\newtheorem{prop}[thm]{Proposition} %%Delete [thm] to re-start numbering
\theoremstyle{remark}
\newtheorem{ex}[thm]{Example}
\theoremstyle{remark}
\newtheorem{rem}[thm]{Remark}
\theoremstyle{plain}
\theoremstyle{plain}
\newtheorem{cor}[thm]{Corollary}
\theoremstyle{plain}
\newtheorem{lem}[thm]{Lemma} %%Delete [thm] to re-start numbering
\theoremstyle{definition}
\newtheorem{defn}[thm]{Definition}
\newcommand{\Div}[1]{\operatorname{D}(#1)}
\newcommand{\lDiv}[2]{\operatorname{D}_{\!#1}(#2)}
\newcommand{\CC}{(\mathcal{C})}
\newcommand{\fr}[1]{\Theta_{#1}}
\newcommand{\XX}{{\bf X}}
\newcommand{\SSS}{{\bf S}}
\newcommand{\sXX}{{\bf \scriptscriptstyle X}}
\newcommand{\sX}{{\scriptscriptstyle X}}
\newcommand{\xx}{{\bf x}}
\newcommand{\yy}{{\bf y}}
\newcommand{\zz}{{\bf z}}
\newcommand{\lX}{\ell_{\scriptscriptstyle X}}
\newcommand{\HH}{\phi}
\newcommand{\KK}{\psi}
\begin{document}
\title{Finite quotients of groups of I-type}
\author{Fabienne Chouraqui and Eddy Godelle}
\thanks{Both authors are partially supported by the {\it Agence Nationale de la Recherche} ({\it projet
Th\'eorie de Garside}, ANR-08-BLAN-0269-03). The first author was supported by the Israel Science Foundation ($n^0$ 580/07) and also partially supported by the Affdu-Elsevier fellowship.}
\date{\today}
\begin{abstract}To every group of $I$-type, we associate a finite quotient group that plays the role that Coxeter groups play for Artin-Tits groups. Since groups of I-type are examples of Garside groups, this answers a question of D. Bessis in the particular case of groups of I-type. Groups of $I$-type are related to finite set theoretical solutions of the Yang-Baxter equation.
\end{abstract}
\maketitle
AMS Subject Classification: 16T25, 20F36.\\
Keywords: Set-theoretical solution of the quantum Yang-Baxter equation;  Garside groups.
%%%%%%%%%%%%%%%%%%%%%%%%%%%%%%%%%%%%%%%%%%%%%%%
\section*{Introduction}
%%%%%%%%%%%%%%%%%%%%%%%%%%%%%%%%%%%%%%%%%%%%%%%
%%%%%%%%%%%%%%%%%%%%%%%%%%%%%%%%%%%%%%%%%%%%%%%
The motivation that led to develop Garside group theory at the end of the 1990's~\cite{DePa} or, more recently, to  develop Garside family theory~\cite{ddkgm} was to extract the main ideas of Garside's theory of braids~\cite{garside} and to provide a general framework that can be used to understand the algebraic structure of other groups or, more generally, categories. This approach led to many developpments in the last decade and it turns out that most of the main objects that appear in the context of braid groups can be generalized to Garside theory framework. Braid groups are nicely related to symmetric groups. More precisely, the symmetric group on $n$ elements is a quotient of the braid group on $n$ strands. Tits~\cite{tits} extends this result by associating a so-called Artin-Tits group to each Coxeter group, so that the latter is a quotient of the former. Braid groups and, more generally, Artin-Tits groups associated with finite Coxeter groups are seminal examples of Garside groups. Moreover, Coxeter group theory  is a crutial  tool for the study of Artin-Tits groups. Therefore, a natural problem, which was addressed by Bessis in~\cite{bessis2}, is to decide which Garside groups can be associated an object (a \emph{generating generated group}) that plays the role that the symmetric group plays for the braid group (see Section~\ref{sec_genmon} for definitions and a precise question). At the present time, this question remains widely open, even if partial results exist (see~\cite{bessis2}). One attempt to study this question is to consider particular families of Garside groups. In~\cite{chou_art}, the first author has shown that Yang-Baxter theory provides a large family of Garside groups. More precisely, in~\cite{etingof}, Etingof, Soloviev and Schedler associate a group called the \emph{structure group} to each non-degenerate and symmetric set-theoretical solution of the Yang-Baxter equation. It turns out that, firstly, these groups are the so-called groups of $I$-type~\cite{gateva98, jespers}, in other words they are the groups of fractions of monoids that possess a presentation of a particular type (see Theorem~\ref{thm_garsidetableau_structuregp2} below) and, secondly, that these associated monoids  are Garside monoids~\cite{chou_art}. In particular, structure groups are Garside groups. In the present paper we address the question of associating to each structure group, a finite group that plays the role that Coxeter groups play for Artin-Tits groups. One should remark that structure groups are Abelian-by-Finite~\cite[Cor.2.4]{jespers}. So, a naive attempt to answer this question could be to consider the finite quotient group provided by the Abelian-by-Finite structure. However it is easy to verify that this approach does not work (see Section~\ref{secfinquot}). We provide a positive answer for every structure group. Under an extra technical property, denoted by~$\CC$, we obtain a presentation of the finite quotient. Let us postpone some definitions to Section~\ref{sec_bcgd} and state the main result of the paper:
\begin{thm*}(Corollary~\ref{cor_basedon} and Propositions~\ref{lem_cns} and~\ref{label:propfrozensbgp}) Let~$(X,S)$ be a set theoretical solution of the Yang-Baxter. Denote by $n$ the cardinality of $X$, by~$G(X,S)$ its structure group and by~$M(X,S)$ its associated Garside monoid. Then\\
(1)  There is a finite quotient~$W(X,S)$ of $G(X,S)$ that is a generating generated group for $M(X,S)$.\\
(2) If $M(X,S)$ verifies Property~$\CC$, then $W(X,S)$ is a generating generated section for~$M(X,S)$. The order of $W(X,S)$ is $2^n$ and there is an exact sequence \begin{equation} 1\to N(X,S)\to G(X,S)\to W(X,S)\to 1\end{equation} where $N(X,S)$ is a free Abelian group of rank~$n$.
 \end{thm*}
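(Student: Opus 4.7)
The plan is first to construct a natural free abelian normal subgroup $N(X,S)$ of $G(X,S)$ of rank $n$, define $W(X,S)=G(X,S)/N(X,S)$, and then verify the axioms of a (generating generated) group or section. The key subtlety emphasized in the introduction is that the abelian normal subgroup provided by the general Abelian-by-Finite structure of Jespers is not the right choice: we must instead use a subgroup adapted to the Garside structure of $M(X,S)$.

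To identify $N(X,S)$, I would look for a free abelian subgroup of rank $n$ generated by distinguished elements of $G(X,S)$ built from the atoms in $X$---most plausibly, the squares $\{x^2 : x\in X\}$, or closely related commuting combinations arising from the I-type relations $xy=\sigma_x(y)\,\tau_y(x)$. Normality is then checked by direct computation: conjugation by an atom must send each generator of $N(X,S)$ into $N(X,S)$, which should follow from bijectivity of the maps $\sigma_x,\tau_y$ entering the defining relations of $M(X,S)$. Freeness and rank $n$ would be deduced by projecting onto the abelianisation $G(X,S)^{\mathrm{ab}}\cong\mathbb{Z}^n$ and showing $N(X,S)$ maps to a finite-index sublattice.

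Next, to show $W(X,S)$ is a generating generated group for $M(X,S)$, I would verify Bessis' axioms one by one: the atoms of $M(X,S)$ project to a generating set $\bar X\subseteq W(X,S)$ whose elements are involutions; the Garside element $\Delta\in M(X,S)$ projects to a distinguished element $w_0$; and the quotient map intertwines the appropriate length/divisibility data. Finiteness of $W(X,S)$ should follow from the fact that every class admits a representative in $M(X,S)$ of length bounded by the Garside length of $\Delta$. Under Property~$\CC$, the crucial refinement is that the projection restricts to an \emph{injection} on the set of simple elements of $M(X,S)$; since the simples are naturally indexed by subsets of $X$ (yielding at most $2^n$ of them) and since $\CC$ is precisely what should rule out collapses between two simples modulo $N(X,S)$, one concludes both that $|W(X,S)|=2^n$ and that the simples constitute a set-theoretic section.

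The main obstacle is pinning down the correct $N(X,S)$ and simultaneously verifying that it is normal, free abelian of rank $n$, \emph{and} that the resulting quotient is compatible with the Garside simples; the choice must exploit the full I-type presentation rather than merely the Abelian-by-Finite structure. A secondary, more delicate step is interpreting Property~$\CC$ as the precise combinatorial condition under which the set of simples is a transversal of $N(X,S)$, so that the counting argument yielding $|W(X,S)|=2^n$ and the exact sequence go through cleanly.
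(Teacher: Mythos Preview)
Your approach inverts the paper's logic and, in doing so, runs into a genuine gap for part~(1). The paper does \emph{not} first identify $N(X,S)$ and then quotient; instead it builds a linear representation $\KK:G(X,S)\to GL(V)$ by signed permutation matrices (a twist of the natural permutation action $x\mapsto f_x^{-1}$, with a sign flip on the ``frozen'' direction), sets $W(X,S)=\KK(G(X,S))$, and proves directly that $\KK$ is injective and length-preserving on $\Div\Delta$ and that $\KK(\Delta)$ is $\XX$-balanced. The kernel $N(X,S)$ is only identified afterward, and only under Property~$\CC$. Your plan requires naming $N(X,S)$ up front in the general case, but the paper gives no such description without~$\CC$, and indeed Example~3.9 shows $|W(X,S)|=48\ne 2^4$ there, so no ``obvious'' rank-$n$ lattice will do the job uniformly.

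Two specific errors: the correct generators of $N(X,S)$ (under~$\CC$) are the \emph{frozen elements} $xy$ with $S(x,y)=(x,y)$, not the squares $x^2$; these coincide only when every frozen pair is diagonal, which is not assumed. Relatedly, the images of the atoms in $W(X,S)$ are \emph{not} involutions in general: under~$\CC$ one has $\KK_x\KK_y=1$ for the frozen pair $(x,y)$, so $\KK_x^{-1}=\KK_y$, but $y\ne x$ typically. Even under~$\CC$, normality of the frozen subgroup and its free-abelian structure require work (Lemmas~3.3--3.5 in the paper), and the proof that simples form a transversal rests on the characterisation of $\Div\Delta$ as exactly those elements with no frozen subword (Proposition~2.2), which is the real engine behind both injectivity on simples and the $2^n$ count.
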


The paper is organised as it follows. In Section~\ref{sec_bcgd}, we introduce the background that we shall need. We recall the notion of a Garside group, the Yang-Baxter equation, the structure group of a set theoretical solution, and the generating group method. In Section~\ref{secfinquot}, we define the group $W(X,S)$ and prove Corollary~\ref{cor_basedon}. In Section~\ref{sec_ftquot}, we focus on the special case where $M(X,S)$ verifies Property~$\CC$. We provide a presentation fo~$W(X,S)$ and prove Propositions~\ref{lem_cns}. and~\ref{label:propfrozensbgp}.
%%%%%%%%%%%%%%%%%%%%%%%%%%%%%%%%%%%%%%%%%%%%%%%
\section{Background}
%%%%%%%%%%%%%%%%%%%%%%%%%%%%%%%%%%%%%%%%%%%%%%%
%%%%%%%%%%%%%%%%%%%%%%%%%%%%%%%%%%%%%%%%%%%%%%%
\label{sec_bcgd}
In this section, we introduce the background that we need. We start with the definitions of a \emph{Garside group} and  of a \emph{Garside monoid}. Then, we introduce those Garside groups that arise as \emph{structure groups}, in other words as groups of $I$-type, and recall how they are related to the Yang-Baxter equation. Finally, we recall the crutial notion for our study, that is the \emph{generating group method}.
%%%%%%%%%%%%%%%%%%%%%%%%%%%%%%%%%%%%%%%%%%%%%%%
\subsection{Garside monoids}
%%%%%%%%%%%%%%%%%%%%%%%%%%%%%%%%%%%%%%%%%%%%%%%
\label{sec_bcgd_gars}
Here, we recall some basic material on Garside theory, and refer to~\cite{deh_francais}, \cite{ddkgm} for more details.
We start with some preliminaries. If $M$ is a monoid generated by a set $X$, and if $g\in M$ is the image of the word~$w$ by the canonical morphism from the free monoid on $X$ onto $M$, then we say that \emph{$w$ represents $g$} or, equivalently, that \emph{$w$ is a word repesentative of $g$}. A monoid~$M$ is \emph{cancellative} if for every~${e,f,g,h}$ in $M$, the equality~$efg = ehg$ implies~$f = h$. The element $f$ is a \emph{left divisor}  ({\it resp.} a \emph{right divisor}) of $g$ if there is an element $h$ in $M$ such that $g=fh$ ({\it resp.} $g = hf$).  It is left noetherian ({\it resp.}  \emph{right noetherian}) if every sequence~$(g_n)_{n\in\mathbb{N}}$ of elements of $M$ such that $g_{n+1}$ is a left divisor ({\it resp.} a right divisor) of $g_n$ stabilizes. It is noetherian if it is both left and right noetherian. An element~$\Delta$ is said to be \emph{balanced} if it has the same set of right and left divisors. In this case, we denote by~$\Div{\Delta}$ its set of divisors. If~$M$ is a cancellative and noetherian monoid, then left and right divisibilities are partial orders on~$M$.

\begin{defn} (1) A \emph{locally Garside monoid} is a cancellative noetherian monoid such that
\begin{enumerate}
 \item[(a)] any two elements have a common multiple for left-divisibility  if and only if they have a least common multiple for left-divisibility;
\item[(b)] any two elements have a common multiple for  right-divisibility if and only if they have a least common multiple for right-divisibility.
\end{enumerate}
(2)  A \emph{Garside element} of a locally Garside monoid is a balanced element~$\Delta$ whose set of divisors~$\Div{\Delta}$ generates the whole monoid. In this case, $\Div{\Delta}$ is called a \emph{ Garside family} of $M$.\\
(3) A monoid is a \emph{Garside monoid} if it is a locally Garside monoid with a Garside element whose set of divisors~$\Div{\Delta}$ is finite.\\
(4) A (\emph{locally}) \emph{Garside group}~$G(M)$ is the enveloping group of a (locally) Garside monoid~$M$.
\end{defn}
Garside groups have been first introduced in~\cite{DePa}. The seminal examples are the \emph{spherical type Artin-Tits groups}. We refer to~\cite{DiM} for general results on locally Garside groups. Recall that an element $g\neq 1$ in a monoid is called an \emph{atom} if the equality $g = fh$ implies $f = 1$ or $h = 1$. It follows from the definining properties of a Garside monoid that the following properties hold for a Garside monoid~$M$: The monoid~$M$ is generated by its set of atoms, and every atom divides the Garside elements. there is no invertible element, except the trivial one, and any two elements in $M$ have a left ({\it resp. right}) gcd and a left ({\it resp. right}) lcm; in particular, $M$ verifies the Ore's conditions, so it embeds in its group of fractions~\cite{Clifford}; in the sequel we will always consider $M$ as a submonoid of its group of fractions. The left and right gcd of two Garside elements are Garside elements and coincide; therefore, by the noetherianity property there exists a unique minimal Garside element for both left and right  divisibilities. This element~$\Delta$ will be called \emph{the} Garside element of the monoid and the set $\Div{\Delta}$ will be called \emph{the} Garside family of~$M$, and the elements of~$\Div{\Delta}$ will be called the \emph{simple} elements of $M$. Finally it is important to notice that if $\Delta$ is a balanced element then, $\Div{\Delta}$ is \emph{closed under factors}: if $fgh$ belongs to $\Div{\Delta}$, then $f,g$ and $h$ belong to $\Div{\Delta}$.

%%%%%%%%%%%%%%%%%%%%%%%%%%%%%%%%%%%%%%%%%%%%%%%%%%%%%%%%%%%%%%%%%
\subsection{Set theoretical solution of the Quantum Yang-Baxter Equation}
%%%%%%%%%%%%%%%%%%%%%%%%%%%%%%%%%%%%%%%%%%%%%%%%%%%%%%%%%%%%%%%%%
\label{secQYBE}
Here, we introduce basic notions related to the Quantum Yang-Baxter Equation and the main objects of our study, that is, structure groups.  We follow~\cite{etingof} and refer to it for more details.

Fix a finite dimensional vector space~$V$ on the field~$\mathbb{R}$. The Quantum Yang-Baxter Equation on~$V$ is the equality \begin{equation}R^{12}R^{13}R^{23}=R^{23}R^{13}R^{12}\end{equation} of linear transformations on $V  \otimes V \otimes V$ where the indeterminate is a linear transformation~$R: V  \otimes V\to V  \otimes V$, and $R^{ij}$ means $R$ acting on the $i$th and $j$th components. A \emph{set-theoretical solution} of this equation is a pair~$(X,S)$ such that $X$ is a basis for $V$, and $S : X \times X \rightarrow X \times X$ is a bijective map that induces a solution~$R$ of the QYBE.
Following \cite{etingof}, we introduce the convenient functions~$g_x:X\to X$ and $f_x:X\to X$ for $x$ in $X$ by setting  \begin{equation}S(x,y)=(g_{x}(y),f_{y}(x)).\end{equation} The pair~$(X,S)$ is said to be \emph{nondegenerate} if for any  $x\in X$, the maps $f_{x}$ and $g_{x}$ are bijections. It is said to be \emph{symmetric} if  it is \emph{involutive}, that is $S\circ S = Id_X$, and  \emph{braided}, that is $S^{12}S^{23}S^{12}=S^{23}S^{12}S^{23}$, where the map $S^{ii+1}$ means $S$ acting on the $i$th and $(i+1)$th components of $X^3$.

\begin{defn} \label{def_struct_gp}Assume~$(X,S)$ is non-degenerate and symmetric. The \emph{structure group} of $(X,S)$ is defined to be the group~$G(X,S)$ with the following group presentation:
\begin{equation}\langle X\mid\ xy = g_x(y)f_y(x)\ ;\ x,y\in X,\ S(x,y)\neq (x,y) \rangle\label{equation:structuregroup1}.\end{equation}
\end{defn}

Since the maps $g_x$ are bijective and $S$ is involutive, one can deduce that for each $x$ in $X$ there are unique $y$ and $z$ such that~$S(x,y) = (x,y)$ and~$S(z,x) = (z,x)$. Therefore, the presentation of~$G(X,S)$ contains $\frac{n(n-1)}{2}$ non-trivial relations. In the sequel, we denote by $M(X,S)$ the monoid defined by the monoid presentation~(\ref{equation:structuregroup1}). In particular, $G(X,S)$ is the enveloping group of $M(X,S)$.

Let $\alpha:X \times X \rightarrow X\times X$ be defined by $\alpha(x,y)=(y,x)$, and let $R=\alpha \circ S$. The map $R$ is the so-called \emph{$R$-matrix} corresponding to $S$. Etingof, Soloviev and Schedler show in \cite{etingof} that $(X,S)$ is a braided pair if and only
 if $R$ satisfies the QYBE. A solution~$(X,S)$ is said to be \emph{trivial} if the maps $f_{x}$ and $g_{x}$ are the identity on $X$ for all  $x\in X$, that is if $S$ is the map~$\alpha$ defined above.

The connection between set theoretical solutions of the Yang-Baxter equation and Garside groups has been established by the first author. Before stating it, let us recall the following definition:
\begin{defn}\label{def:Itype} A monoid $M$ is a monoid of $I$-type if it admits a finite monoid presentation~$\langle X\mid R \rangle$ such that:
\begin{enumerate}
\item[(a)] the cardinality of $R$ is $n(n-1)/2$, where $n$ is the cardinality of $X$, and each relation in $R$ is of the type $xy = zt$ with $x,y,z,t\in X$;
\item[(b)] every word $xy$, with $x,y$ in $X$, appears at most once in $R$.
\end{enumerate}
\end{defn}
Actually this is not the initial definition of a monoid of $I$-type but the one given here has been shown to be equivalent to the initial one. We will say that a group is of $I$-type if and only if it is the envelopping group of a monoid of $I$-type. The above presentation will be called a  presentation of $I$-type.

\begin{thm}\label{thm_garsidetableau_structuregp2} (1) \cite{gateva98, jespers} Let $G$ be a group. Then, the group~$G$ is of $I$-type if and only if~$G$ is a structure group. More precisely,
\begin{enumerate} \item[(a)] if~$(X,S)$ is a non-degenerate symmetric set-theoretical solution~$(X,S)$, then $M(X,S)$ is a monoid of $I$-type and the presentation~(\ref{equation:structuregroup1}) is a presentation of $I$-type;\item[(b)] assume~$M$ is a  Garside monoid that admits a presentation~$\langle X\mid R \rangle$ of $I$-type, then there exists a map~$S: X \times X \rightarrow X \times X$ such that $(X,S)$ is  a non-degenerate symmetric set-theoretical solution. Moreover, the presentation in~(\ref{equation:structuregroup1}) is~$\langle X\mid R \rangle$. In particular~$M(X,S) = M$. \end{enumerate}
(2) \cite{chou_art} For every non-degenerate symmetric set-theoretical solution~$(X,S)$, the structure group $G(X,S)$ is a Garside group, whose Garside monoid is~$M(X,S)$. Moreover, the atom set of the monoid~$M$ is $X$, and the Garside element is both the left lcm and the right lcm of~$X$.
\end{thm}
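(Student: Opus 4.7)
For (1)(a), I would unpack the presentation~(\ref{equation:structuregroup1}) directly. Every defining relation has the shape $xy = g_x(y)f_y(x)$ with all four letters in $X$, so axiom (a) of Definition~\ref{def:Itype} is built in and only the count of relations must be checked. Involutivity $S\circ S=\mathrm{Id}$ forces the non-fixed pairs $(x,y)$ to split into $S$-orbits of size exactly two, and the remark following Definition~\ref{def_struct_gp} pins the number of diagonal fixed points of $S$ at exactly $n$; hence there are $(n^2-n)/2 = n(n-1)/2$ orbits and one relation per orbit. Axiom (b) of Definition~\ref{def:Itype} is immediate from $S$ being a function, since then $(x,y)$ appears on the left of at most one relation.

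For (1)(b), the plan is to reverse the construction. Given an $I$-type presentation $\langle X\mid R\rangle$ of a Garside monoid $M$, define $S$ by $S(x,y)=(z,t)$ whenever $xy=zt\in R$, and $S(x,y)=(x,y)$ otherwise. Definition~\ref{def:Itype}(b) makes $S$ well defined, and reading each relation in both directions yields $S\circ S=\mathrm{Id}$, hence bijectivity of $S$. The substantive verifications are non-degeneracy and braidedness. For non-degeneracy, I would use that in the Garside monoid $M$ any two atoms $x,z\in X$ admit a right lcm, and combine this with length-preservation of the $I$-type relations and the counting $|R|=n(n-1)/2$ to conclude that this lcm is a length-two word $xw=zw'$; reading off the corresponding relation exhibits $z=g_x(w)$, which yields surjectivity, hence bijectivity by finiteness of $X$, of $g_x$, and symmetrically of $f_x$. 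Braidedness $S^{12}S^{23}S^{12}=S^{23}S^{12}S^{23}$ translates into confluence of the two natural rewriting sequences on a length-three word in $X^{*}$, which is provided by cancellativity and uniqueness of lcms in the Garside structure. Finally, $M(X,S)=M$ since the presentations coincide letter-for-letter.

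For (2), I would follow the line of~\cite{chou_art}. The relations in~(\ref{equation:structuregroup1}) are length-preserving, so $M(X,S)$ is naturally $\mathbb{Z}_{\geq 0}$-graded; this immediately gives both noetherianity conditions and identifies the atom set with $X$. Cancellativity then follows by a diamond argument based on braidedness and axiom (b) of the $I$-type presentation. Existence of left and right lcms for pairs of atoms is built into the presentation: either $(x,y)$ is $S$-fixed (and $xy$ is its own common value) or $xy=g_x(y)f_y(x)$ is the unique length-two common left multiple of $x$ and $g_x(y)$. Iterating produces $\Delta=\mathrm{lcm}(X)$ as an element of word-length exactly $n$; performing the same iteration on the right, using the left-right symmetry of the $I$-type axioms, identifies $\Delta$ with the right lcm of $X$, so $\Delta$ is balanced, and $\Div{\Delta}$ is finite because $\Delta$ has bounded word length. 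This furnishes the Garside structure on $M(X,S)$, hence on $G(X,S)$.

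The technical heart of the theorem, and the main obstacle, is (1)(b): translating abstract Garside lcm and cancellativity data into the pointwise symmetric set-theoretic properties of $S$ (non-degeneracy and braidedness) is not formal, and it is exactly here that the genuine content of the Gateva-Ivanova--Jespers equivalence resides.
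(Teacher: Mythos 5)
The paper gives no proof of this theorem: it is imported as background, part (1) from \cite{gateva98,jespers} and part (2) from \cite{chou_art}, so there is no internal argument to measure your proposal against. Judged on its own terms, your outline follows the route of those references (read $S$ off the relations; use cancellativity and lcms of atoms to get non-degeneracy; use homogeneity and completeness of the presentation for the Garside axioms), and the overall strategy is the standard one. But as written it is a roadmap to the cited proofs rather than a substitute for them, and it contains two errors of detail worth fixing.

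First, in (1)(a) deducing axiom (b) of Definition~\ref{def:Itype} from ``$S$ being a function'' only shows that $xy$ occurs on the \emph{left} of at most one relation; to rule out $xy$ also occurring on the right of a second relation you need involutivity ($S(u,v)=(x,y)$ forces $(u,v)=S(x,y)$, which produces the same relation read backwards). Second, in (2) the clause ``either $(x,y)$ is $S$-fixed (and $xy$ is its own common value)'' is wrong: when $S(x,y)=(x,y)$ the word $xy$ is frozen and is precisely \emph{not} a least common multiple of anything --- it is not even simple (Proposition~\ref{prop_simple_lcm2}(2) of this paper). The correct dichotomy is on whether the two atoms coincide: for distinct atoms $x\neq z$ the right lcm is read off the unique nontrivial relation whose two sides begin with $x$ and $z$ respectively, and proving that such a relation exists and is unique is exactly where cancellativity and the count $\#R=n(n-1)/2$ must be combined. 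Finally, the two genuinely hard steps --- deducing braidedness of $S$ from the Garside hypothesis in (1)(b), and showing in (2) that the lcm of all $n$ atoms exists and has length exactly $n$ --- are asserted rather than argued; in particular ``confluence provided by uniqueness of lcms'' needs care exactly when an intermediate pair is frozen, since there $S^{12}$ or $S^{23}$ acts as the identity and the rewriting step corresponds to no defining relation. Your closing sentence correctly locates the content of the theorem, but those steps are the proof, not a remark after it.
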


As explained in Section~\ref{sec_bcgd_gars}, in the sequel $M(X,S)$  (can and) will be identified with the submonoid of~$G(X,S)$ generated by~$X$.

\begin{ex} \label{exemple:exesolu_et_gars} Set $X = \{x_1,x_2,x_3,x_4\}$, and let~$S: X\times X\to X\times X$ defined by $S(x_i,x_j)=(x_{g_{i}(j)},x_{f_{j}(i)})$ where~$g_i$ and $f_j$ are permutations on~$\{1,2,3,4\}$ as follows: $g_{1}=(2,3)$, $g_2=(1,4)$, $g_{3}=(1,2,4,3)$, $g_{4}=(1,3,4,2)$; $f_{1}=(2,4)$, $f_2=(1,3)$, $f_{3}=(1,4,3,2)$, $f_{4}=(1,2,3,4)$.
A direct analysis shows that $(X,S)$ is a non-degenerate symmetric set theoretical solution. The defining presentation of $G(X,S)$ contains six non trivial relations $$\begin{array}{ccccc}
x_{1}x_{2}=x^{2}_{3};& x_{1}x_{3}=x_{2}x_{4};&
x_{2}x_{1}=x^{2}_{4};\\ x_{2}x_{3}=x_{3}x_{1};&
x_{1}x_{4}=x_{4}x_{2};&x_{3}x_{2}=x_{4}x_{1}\\
\end{array}$$\\
and four trivial relations.
\end{ex}

%%%%%%%%%%%%%%%%%%%%%%%%%%%%%%%%%%%%%
\subsection{The generated group method}
%%%%%%%%%%%%%%%%%%%%%%%%%%%%%%%%%%%%
\label{sec_genmon}
We turn now to the notion of a \emph{generating finite group of a monoid}. We almost follow~\cite{bessis2} (see also \cite{michel}). Let $W$ be a group equiped with a set~$X$ that generates~$W$ as a monoid. The pair~$(W,X)$ will be called a \emph{generated group}. We define the length~$\ell_X(w)$ of an element~$w$ in $W$ as the minimal length of a word on~$X$ that represents~$w$.  A \emph{reduced expression} of an element $w$ in $W$ is a word representative $x_1x_2\cdots x_k$ on $X$ such that $\lX(w)=k$. When $w_1,w_2,w_3$ belong to~$W$ such that $w_1 = w_2w_3$ with  $\ell(w_1) = \ell(w_2)+\ell(w_3)$, we say that $w_2$ and $w_3$ are a \emph{left $X$-factor} and a \emph{right $X$-factor} of $w_1$, respectively. Because of the condition on the length in the definition of a left $X$-factor, the  relation ``$w$ is a left $X$-factor of $h$'' is a partial order on $W$. Similarly,  the notion of a right $X$-factor induces a partial order on $W$, too. We say that an element~$w$ in~$W$ is $X$-\emph{balanced} if its sets of left $X$-factors and right $X$-factors coincide. In this case, we denote this set by $\lDiv{\sX}{w}$. Now, for every $X$-balanced element~$w$ in $W$, and given a copy  $\{\underline{v}, v\in \lDiv{\sX}{w}\}$ of $\lDiv{\sX}{w}$, we define a monoid~$M_{w,X}$ by the following monoid presentation:
\begin{equation}\left\langle  \underline{v}, v\in \lDiv{\sX}{w}\mid \underline{v}\,\underline{v'} = \underline{v''}\textrm{ when } v,v',v''\in \lDiv{ \sX}{w},\ \left\{\begin{array}{l} vv' = v''\\\ell_\sX(v)+\ell_\sX(v') = \ell_\sX(v'')\end{array}\right.\right\rangle.\end{equation}

\begin{ex} (1) Take $W = \langle s\mid s^2 = 1\rangle$, $X = \{s\}$ and $w = s$ then~$M_{s,\{s\}}$ is $\{\underline{s}^j\mid j\in\mathbb{N}\}$. \\
(2) Take $W = \langle s\mid s^4 = 1\rangle$, $X = \{s,s^{-1}\}$ and $w = s^2$ then~$M_{s^2,\{s,s^{-1}\}}$ is $\langle a,b\mid a^2 = b^2\rangle$ with $a = \underline{s}$ and $b = \underline{s^{-1}}$.
\end{ex}

Now, one should note that for a finite group~$W$, a subset $X$ of $W$ generates~$W$ as a group if and only if it generates $W$ as a monoid.

\begin{defn} We say that~$(W,X)$ is a \emph{generating generated group} for a monoid $M$ if $W$ is a finite group that contains a $X$-balanced element~$w$ such that $\lDiv{\sX}{w}$ generates $W$ and $M$ is isomorphic to~$M_{w,X}$. When furthermore, $\lDiv{\sX}{w} = W$, we say that~$(W,S)$ is a \emph{generating generated section} for $M$.
\end{defn}

As long as it will not introduce confusion, we will often say that $W$ is a generating generated group ({\it resp.} a generating generated section) for $M$ instead of $(W,X)$ is a generating generated group ({\it resp.} a generating generated section) for $M$. It is easy to see that the map $W\to M_{w,X},\ v\mapsto \underline{v}$ is into and there is a morphism of monoids~$p: M_{w,X}\to W$ defined by $p(\underline{v}) =  v$. Also, the length function~$\ell_{\underline{\sX}} : M_{w,X}\to \mathbb{N}$ is additive and for every~$a\in M_{w,X}$, one has $\lX(p(a))\leq \ell_{\scriptscriptstyle \underline{X}}(a)$.  As a consequence, $\{\underline{v}\mid v\in X\cap\lDiv{\sX}{w}\}$ is the atom set of $M_{w,X}$. Moreover, in the special case where $\lDiv{\sX}{w} = W$, we have $\lX(p(a)) =  \ell_{\underline{\sX}}(a)$  if and only if $a$ belongs to~$\Div{\underline{w}}$.
Here, the crucial result is
\begin{thm} \label{the_jean}\cite{bessis2,michel} Let $W$ be a finite group and $X$ be a generating set. Assume~$w$ is $X$-balanced in~$W$ and that $\lDiv{\sX}{w}$ is a lattice for both partial orders associated with the left and right $X$-factor notions. Then
$M_{w,X}$ is a Garside monoid with~$\{\underline{s}\mid s\in X\cap\lDiv{\sX}{w}\}$ as atom set. The element~$\underline{w}$ is a Garside element of $M_{w,X}$ with~$\Div{\underline{w}} = \{\underline{v}\mid v\in\lDiv{\sX}{w}\}$.
 \end{thm}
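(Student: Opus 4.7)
The plan is to verify the Garside monoid axioms for $M_{w,X}$ step by step, with the lattice hypothesis on $\lDiv{\sX}{w}$ as the main driver. I begin by constructing a length function $\ell: M_{w,X} \to \mathbb{N}$ by setting $\ell(\underline{v})=\ell_X(v)$ and extending additively; this is well-defined precisely because every defining relation $\underline{v}\,\underline{v'}=\underline{v''}$ was imposed only when $\ell_X(v)+\ell_X(v')=\ell_X(v'')$. Additivity of $\ell$ immediately yields both left and right noetherianity as well as the absence of non-trivial invertibles. Simultaneously, the assignment $\underline{v}\mapsto v$ extends to the monoid morphism $p: M_{w,X} \to W$ already noted in the paper, since each relation is a genuine equality in $W$. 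The length argument also gives the atom identification: starting from any expression $a=\underline{v_1}\cdots\underline{v_r}$ and a reduced $X$-word $s_{i,1}\cdots s_{i,k_i}$ for each $v_i$, repeated application of the relations (justified because every prefix of a reduced expression for $v_i\in\lDiv{\sX}{w}$ lies again in $\lDiv{\sX}{w}$) yields $a=\underline{s_{1,1}}\cdots\underline{s_{r,k_r}}$, so $\{\underline{s}\mid s\in X\cap\lDiv{\sX}{w}\}$ generates $M_{w,X}$ and, as its members have length one, is exactly the atom set.

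The key geometric step is to identify left-divisibility in $M_{w,X}$ (restricted to the image of $\lDiv{\sX}{w}$) with the left $X$-factor order on $\lDiv{\sX}{w}$, and symmetrically on the right. The easy direction is immediate from the presentation. For the converse, I would use the lattice hypothesis to build a left greedy normal form: for each $a\in M_{w,X}$, the set of $v\in\lDiv{\sX}{w}$ such that $\underline{v}$ left-divides $a$ has a unique maximum under the left $X$-factor lattice, which I pick as the first factor; iterating produces a canonical expression $a=\underline{u_1}\cdots\underline{u_k}$. From this normal form one reads off the divisibility correspondence and, by lifting the lattice operations on $\lDiv{\sX}{w}$, the existence of left lcms and left gcds for pairs of simple elements in $M_{w,X}$; the right-hand analogue is symmetric.

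The main technical obstacle is cancellativity. My approach would be to show that the presentation of $M_{w,X}$ is complemented in Dehornoy's sense and that word-reversing is confluent: for every pair of atoms with a common left multiple in $M_{w,X}$, the lattice supplies the unique complement data, and these satisfy the coherence (cube) condition needed to propagate confluence to arbitrary elements. Alternatively, cancellativity can be derived directly from injectivity of the greedy normal form combined with induction on $\ell$. Once cancellativity is granted, the locally Garside conditions (a) and (b) extend from pairs of simples to arbitrary pairs of elements by a standard induction on length. Finally, $\underline{w}$ is balanced because its left and right divisor sets in $M_{w,X}$ correspond to the left and right $X$-factor sets of $w$ in $W$, which coincide by the $X$-balanced hypothesis on $w$; the set $\Div{\underline{w}}=\{\underline{v}\mid v\in\lDiv{\sX}{w}\}$ is finite because $W$ is, and it generates $M_{w,X}$ because it contains every atom. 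Hence $M_{w,X}$ is a Garside monoid with Garside element $\underline{w}$ and the claimed simple set.
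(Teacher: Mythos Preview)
The paper does not give its own proof of this theorem: it is quoted, with attribution to \cite{bessis2,michel}, and used as a black box. There is therefore nothing in the present paper to compare your argument against.

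That said, your sketch follows the standard route taken in those references: an additive length function gives noetherianity and the atom set, the lattice hypothesis on $\lDiv{\sX}{w}$ supplies complements, and cancellativity is obtained either via Dehornoy's complemented-presentation machinery or via a greedy normal form. One point to be careful about is that the defining presentation of $M_{w,X}$ is on the full set $\{\underline{v}\mid v\in\lDiv{\sX}{w}\}$, not on the atoms, so to invoke the complemented/cube-condition criterion you must first pass to the induced presentation on $\{\underline{s}\mid s\in X\cap\lDiv{\sX}{w}\}$ and check that the complement data coming from the lattice joins are well defined at that level; this is where the lattice hypothesis is really used. Your alternative, deriving cancellativity from injectivity of the greedy normal form, also works but requires showing that the head of the normal form is well defined \emph{before} cancellativity is available, which needs an argument at the level of words rather than elements.
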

Any spherical type Artin-Tits monoid~$A^+$ has a generating generated section~$(W,X)$: in this case the group $W$ is the associated Coxeter group equipped with its standard generating set~$X$; the element~$w$ is the Coxeter element~$w_0$ of $W$. It has been shown by Bessis in~\cite{bessis2} that dual braid monoids have a generating generated group. In this case, the group~$W$ is again the associated Coxeter group, the set $X$ is the set of all reflections, and $w$ is a Coxeter element. It could be noted that in the case of dual braid monoids, $\lDiv{\sX}{w}$ is not the whole group. These results led Bessis~\cite{bessis2} to address the following question: which Garside monoids have a generating generated group? Clearly, one
can not expect that every Garside monoid has a generating generated group, since there are Garside monoids with non-homogenous length function associated with their atom set. So, the question is restricted to those Garside monoids that possess an additive length function on their atom set. This is clear that monoids of $I$-type are Garside monoids that satisfy this restriction, as the defining relations are homogenous (see Section~\ref{secQYBE}).
%%%%%%%%%%%%%%%%%%%%%%%%%%%%%%%%%%%%%%%%%%%%%%%%%%%%%%
\section{A linear representation of $G(X,S)$}
%%%%%%%%%%%%%%%%%%%%%%%%%%%%%%%%%%%%%%%%%%%%%%%%%%%%%%
%%%%%%%%%%%%%%%%%%%%%%%%%%%%%%%%%%%%%%%%%%%%%%%%%%%%%%
\label{secfinquot}
{Let $X$ be a finite set of cardinality $n$, and $(X,S)$ be a non-degenerate symmetric set-theoretical solution of the QYBE, defined by $S(x,y) = (g_x(y),f_y(x))$, where~$g_x:X\to X$ and~$f_x:X\to X$ are bijective. Let~$M(X,S)$ and~$G(X,S)$ be respectively the corresponding Garside  monoid and Garside group. We denote by~$\Delta$ the Garside element of~$M(X,S)$. We recall that $X$ belongs to~$\Div{\Delta}$ and is the atom set of~$M(X,S)$. In this section, we define a linear representation of~$G(X,S)$ that permits us to answer Bessis's question in the positive. In other words, we associate a finite generating generated group~$W(X,S)$ to~$G(X,S)$, and later on (in Section 3) we find a necessary and sufficient condition on $(X,S)$ so that~$W$ is a generating generated section.

The group~$G(X,S)$ is a group of $I$-type. Jespers and Okninski showed that groups of~$I$-type are Abelian-by-Finite (see \cite{jespers}). Indeed, they showed that if $G(X,S)$ is a group of $I$-type associated with a set theoretical solution~$(X,S)$, where $X$ has cardinality $n$, then
$G(X,S)$ is a subgroup of the (obvious) semi-direct product~$FA_n\rtimes \textrm{Sym}_n$, where
    $FA_n$ is the free Abelian group on $n$ generators and $\textrm{Sym}_n$ is the symmetric group on~$\{1,\cdots,n\}$. Moreover, the first projection~$G(X,S)\to FA_n$ is one-to-one, and there is a subgroup~$W$ of $\textrm{Sym}_n$ and an Abelian subgroup~$A$ of~$FA_n$, such that the sequence \begin{equation}1\to A\to G(X,S)\to W\to 1\end{equation} is exact. \\
     A question that arises naturally is whether this exact sequence can provide a generating generated group for~$G(X,S)$, or in other words whether this group~$W$ is a generating generated group for~$G(X,S)$. As the following example illustrates it, the answer is negative. Take~$G(X,S)$ to be the free Abelian monoid~$FA(x,y)$ on $x,y$. It is a group of $I$-type with the presentation $\langle x,y\mid xy = yx\rangle$. If this approach worked, as $FA(x,y)$ is a Artin-Tits group of spherical type, $W$ should be the Coxeter group~$\langle x,y\mid x^2 = y^2 = 1 ; xy = yx\rangle$. But, unfortunately, it is easy to see that the group~$W$ provided by the above exact sequence is the trivial group, so this approach does not work, and $W$ is not be a generating generated group for~$G(X,S)$ in general.

%%%%%%%%%%%%%%%%%%%%%%%%%%%%%%%%%%%%%%%%%%%%%%%%%%%%%%%%%%%
\subsection{Frozen elements and simple elements}
%%%%%%%%%%%%%%%%%%%%%%%%%%%%%%%%%%%%%%%%%%%%%%%%%%%%%%%%%%%
The main object of this section is to recall a technical result, namely Proposition~\ref{prop_simple_lcm1}, which turns out to be a crucial argument in the sequel. We recall that for every  non-degenerate symmetric solution~$(X,S)$ and every~$x$ in~$X$ there exists a unique $y$ in $X$ such that~$S(x,y) = (x,y)$. In the sequel, we call such a pair~$(x,y)$ a \emph{frozen} pair. In this case, the word ({\it resp.} the element)~$xy$ will be called a frozen word ({\it resp.} a frozen element). A frozen element has therefore a unique word representative, the associated frozen word.

The main result here is that a simple element cannot be represented by a word containing a frozen word as a subword. As already remarked, the defining relations in the presentation~(\ref{equation:structuregroup1}) are homogenous, so we can define a length function~$\ell: M(X,S)\to \mathbb{N}$ so that the length of an element is the length of any of its word representatives on~$X$.
\begin{prop}
\cite{chou_art,chou_godel}\label{prop_simple_lcm1}
Let $a$ be in $M(X,S)$. Denote by $X_l(a)$ the set of its left divisors that belongs to~$X$ and by $X_r(a)$ the set of its right divisors that belong to~$X$. Then, $a$  belongs to~$\Div{\Delta}$ if and only if it is the right lcm of of $X_l(a)$  if and only if it is the left lcm of $X_r(a)$. Moreover in this case, $X_l(a)$ and $X_r(a)$ have the same cardinality, which is $\ell(a)$.
\end{prop}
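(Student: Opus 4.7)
The approach is to split the biconditional into the easy ``lcm implies simple'' direction and the harder converse, the latter resting on a cardinality count that crucially exploits the I-type property. For the easy direction, suppose $a$ equals the right lcm of $X_l(a)$. By Theorem~\ref{thm_garsidetableau_structuregp2}(2), $\Delta$ is the right lcm of the whole atom set $X$; since $X_l(a)\subseteq X$, the right lcm of this subset divides $\Delta$, so $a\in \Div{\Delta}$. The analogous argument, using that $\Delta$ is also the left lcm of $X$, handles the left lcm of $X_r(a)$.

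The heart of the proof is to establish that every simple $a$ satisfies $|X_l(a)|=\ell(a)$, and then to recover that $a$ equals the right lcm of $X_l(a)$ from this. I would argue by induction on $\ell(a)=k$, exploiting that $\Div{\Delta}$ is closed under factors (Section~\ref{sec_bcgd_gars}). Fix a word representative $a = y_1 y_2\cdots y_k$; then $y_1\in X_l(a)$ and the simple element $y_2\cdots y_k$ has exactly $k-1$ atomic left divisors by induction. One then shows that each $z\in X_l(a)\setminus\{y_1\}$ corresponds to a unique rewriting $y_1 y_2'\to z\,t'$ of a two-letter prefix of some word representative of $a$, and that distinct $z$'s come from distinct such rewritings. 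This is precisely where the I-type hypothesis --- each word $xy$ appears in at most one defining relation --- is essential, since it prevents collisions and pins the count down to exactly $k$.

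Once the cardinality is in hand, the converse of the biconditional follows quickly: for simple $a$, let $b$ be the right lcm of $X_l(a)$, so that $b$ left-divides $a$; being a left-divisor of a simple element, $b$ is simple, and $X_l(b)=X_l(a)$. The cardinality step applied to $b$ then gives $\ell(b)=|X_l(b)|=|X_l(a)|=\ell(a)$. Homogeneity of the defining relations together with right cancellativity force $b=a$. The statements for $X_r(a)$ are entirely symmetric. The main obstacle in this plan is the cardinality step: the combinatorics of prefix rewriting in a Garside monoid is delicate in general, and only the rigidity of the I-type structure --- the $n(n-1)/2$ nontrivial relations forming an injective pairing on two-letter words --- guarantees that the atom count goes through without collapse.
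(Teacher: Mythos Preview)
The paper does not actually prove this proposition: it is quoted from~\cite{chou_art,chou_godel} and stated without proof, so there is no argument in the present paper to compare your proposal against. That said, your outline has the right architecture --- the ``lcm of atoms $\Rightarrow$ simple'' direction via Theorem~\ref{thm_garsidetableau_structuregp2}(2) is correct, and deducing $a=b$ at the end from $\ell(b)=\ell(a)$ and $b\mid_l a$ is fine (additivity of $\ell$ and absence of nontrivial units suffice; you do not need cancellativity there).

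There is, however, a genuine gap in your cardinality step. Your prefix-rewriting correspondence, as stated, only yields an injection from $X_l(a)\setminus\{y_1\}$ into $X_l(y_2\cdots y_k)$: given $z\neq y_1$ dividing $a$ on the left, the right lcm of $y_1$ and $z$ has length~$2$ and furnishes a unique $y'$ with $y_1y'=zt'$ left-dividing $a$, whence $y'\in X_l(y_2\cdots y_k)$. With the induction hypothesis this gives $|X_l(a)|\le k$, not $|X_l(a)|=k$. For the reverse inequality you must show that every $y'\in X_l(y_2\cdots y_k)$ produces a new atom $z\neq y_1$ via a relation $y_1y'=zt'$; this fails precisely when $(y_1,y')$ is a frozen pair. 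So the missing ingredient is: \emph{if $a$ is simple, no frozen word can occur as a length-$2$ left factor of $a$}. You cannot invoke Proposition~\ref{prop_simple_lcm2}(2) for this, since in the paper that statement is deduced \emph{from} the result you are proving. One clean way to close the gap, along the lines of~\cite{chou_art}, is to argue the other way round: show first that for every subset $Y\subseteq X$ of size $k$ the right lcm of $Y$ has length exactly $k$ (an explicit inductive computation using the bijectivity of the maps $g_x$), and then identify $\Div{\Delta}$ with the set of such lcms. Your induction as written does not supply this lower bound.
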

From the above result, we deduce that
\begin{prop}\label{prop_simple_lcm2}
 Let $a$ be an element of $M(X,S)$ and $x$ be in $X$.\\
(1) Assume  $a$ belongs to~$\Div{\Delta}$ and  $xa$ does not belong to~$\Div{\Delta}$.  Then there exist $y$ in $X$ and $b$ in $\Div{\Delta}$ such that $a = yb$ and $(x,y)$ is a frozen pair.\\
(2) $a$ belongs to~$\Div{\Delta}$ $\Leftrightarrow$ no expression of $a$ contains a frozen word as a subword.\\
(3) Assume $y$ lies in $X$ so that both $xa$ and $ay$ are in $\Div{\Delta}$ but $xay$ is not. If $a = z_1\cdots z_k$ with $z_1,\ldots,z_k$ in $X$, then there exist $y_1,\ldots,y_{k+1}$ in $X$ and $x_1,\ldots,x_k$ in $X$ so that $y_{k+1} = y$, $S(z_i,y_{i+1}) = (y_i,x_i)$, no $(z_i,y_{i+1})$ is a frozen pair and $(x,y_1)$ is a frozen pair. In particular, $ay = z_1\cdots z_{i-1}y_ix_i\cdots  x_k = y_1x_1\cdots x_k$ in $M(X,S)$.
\end{prop}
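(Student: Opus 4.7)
The three parts form a chain: Part~(2) follows from Part~(1) by a maximal-prefix argument, and Part~(3) combines Parts~(1), (2), and the sliding relations.

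\emph{Part~(1).} Non-degeneracy and involutivity of $S$ give each $x\in X$ a unique frozen partner $\hat x\in X$ with $(x,\hat x)$ frozen. I would argue contrapositively: assuming $\hat x\notin X_l(a)$, I show $xa\in\Div{\Delta}$. For every $y\in X_l(a)$ the pair $(x,y)$ is non-frozen, so $xy=g_x(y)\,f_y(x)$ in $M(X,S)$; writing $a=yc$ yields $xa=g_x(y)\,(f_y(x)c)$, so $g_x(y)\in X_l(xa)$. Since $g_x\colon X\to X$ is a bijection and $g_x(\hat x)=x\notin g_x(X_l(a))$, the set $\{x\}\cup g_x(X_l(a))$ has $\ell(a)+1=\ell(xa)$ distinct elements of $X_l(xa)$. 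A right-lcm computation, using the lattice structure of simples in the Garside monoid $M(X,S)$, shows that $xa$ is the right lcm of this set and a fortiori of $X_l(xa)$; by Proposition~\ref{prop_simple_lcm1}, $xa\in\Div{\Delta}$, contradicting the hypothesis. Hence $\hat x\in X_l(a)$, and closure of $\Div{\Delta}$ under factors gives $a=\hat x\,b$ with $b\in\Div{\Delta}$.

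\emph{Part~(2).} For the forward direction, a frozen subword $xy$ of an expression of $a\in\Div{\Delta}$ would force $xy\in\Div{\Delta}$ by closure under factors; but $xy$ has only the representative $xy$, so $X_l(xy)=\{x\}$, forcing $\ell(xy)=1$ via Proposition~\ref{prop_simple_lcm1}, absurd. For the converse (contrapositive), given $a\notin\Div{\Delta}$, fix any word representative $a=z_1\cdots z_m$ and let $k$ be the largest index with $z_1\cdots z_k$ simple (so $k<m$). Apply the right-handed analog of Part~(1)---obtained by left--right symmetry and using, for each $z\in X$, the unique $\tilde z\in X$ with $(\tilde z,z)$ frozen---to the simple $z_1\cdots z_k$ and the letter $z_{k+1}$, getting $z_1\cdots z_k=b\cdot y'$ with $(y',z_{k+1})$ frozen. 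Extending $b$ to a word ending with $y'$ and concatenating with $z_{k+1}\cdots z_m$ yields an expression of $a$ containing the frozen subword $y'\,z_{k+1}$.

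\emph{Part~(3).} The chain $ay=z_1\cdots z_{i-1}\,y_i\,x_i\cdots x_k=y_1 x_1\cdots x_k$ comes from downward induction on $i$ via the relation $z_iy_{i+1}=y_ix_i$. Non-frozenness of each $(z_i,y_{i+1})$ follows from Part~(2): a frozen $(z_i,y_{i+1})$ would make an intermediate expression of the simple element $ay$ contain a frozen subword. The frozenness of $(x,y_1)$ is the main obstacle. Part~(1) applied to $ay\in\Div{\Delta}$ with $xay\notin\Div{\Delta}$ places the unique frozen partner $\hat x$ in $X_l(ay)$; the slide places $y_1$ in $X_l(ay)$ as well. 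To see $y_1=\hat x$, I would examine the expression $xay=x\,y_1\,x_1\cdots x_k$: by Part~(2) applied to the simple $ay=y_1 x_1\cdots x_k$, none of $(y_1,x_1),\ldots,(x_{k-1},x_k)$ is frozen. If $(x,y_1)$ were also non-frozen, then iteratively applying the rewriting $xy_1=g_x(y_1)f_{y_1}(x)$ and sliding $f_{y_1}(x)$ rightward through $x_1,\ldots,x_k$ would show that every expression of $xay$ is frozen-subword-free (here one invokes the braided property of $S$ to argue confluence of the sliding), contradicting $xay\notin\Div{\Delta}$ via Part~(2). Hence $(x,y_1)$ is frozen, which by uniqueness of the frozen partner forces $y_1=\hat x$.
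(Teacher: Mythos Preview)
Your Parts~(1) and~(2) are essentially the paper's proof. In~(1) the paper argues contrapositively exactly as you do: if no frozen partner of $x$ lies in $X_l(a)$, then $\{x\}\cup g_x(X_l(a))$ gives $\ell(a)+1$ distinct left-divisors of $xa$, forcing $xa$ to coincide with their right lcm and hence to be simple. In~(2) the paper uses suffixes and applies~(1) directly, while you use prefixes and invoke the left--right symmetric version of~(1); this is a harmless variation.

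Part~(3), however, has a genuine gap. Your backward slide from $y_{k+1}=y$ correctly produces the $y_i,x_i$ and correctly shows each $(z_i,y_{i+1})$ is non-frozen via Part~(2). The problem is the final step, proving $(x,y_1)$ frozen. Your argument is: if $(x,y_1)$ were non-frozen, slide $x$ rightward through $y_1x_1\cdots x_k$ and ``by confluence'' conclude that \emph{every} expression of $xay$ is frozen-free, hence $xay\in\Div{\Delta}$. But sliding produces at best \emph{one} expression, and there is no guarantee that the slide does not itself hit a frozen pair along the way (if it does, you have merely exhibited an expression of $xay$ containing a frozen subword, which is consistent with $xay\notin\Div{\Delta}$, not a contradiction). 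The braided relation ensures confluence of the rewriting system defining $M(X,S)$, but it does not say that the absence of frozen subwords is preserved under all rewritings; indeed Part~(2) itself shows that non-simple elements have \emph{both} frozen-free and frozen-containing expressions. So the implication you need simply is not there.

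The paper runs Part~(3) in the opposite direction. It \emph{defines} $y_1$ to be the frozen partner of $x$ from the outset, uses Part~(1) to place $y_1$ in $X_l(ay)$, observes $y_1\neq z_1$ (since $xa$ is simple), and then takes $y_2$ to be the unique element with $z_1y_2=y_1x_1$ (the right lcm of $z_1$ and $y_1$). The delicate point is to check that the induction hypothesis applies to $z_2\cdots z_k$ with the new letter $x'_2$ (the frozen partner of $y_2$): one must verify that $x'_2z_2\cdots z_k$ is simple, which the paper does by showing the alternative would force a frozen subword inside an expression of the simple element $xa$. This forward induction avoids entirely the need to identify the backward-slid $y_1$ with the frozen partner of $x$.

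If you want to rescue the backward approach, one route is to prove that $y_1\notin X_l(a)$: since $X_l(a)\subset X_l(ay)$ with $|X_l(ay)\setminus X_l(a)|=1$ and Parts~(1)--(2) give that this single extra element is the frozen partner $\hat x$ of $x$, it would follow that $y_1=\hat x$. But establishing $y_1\notin X_l(a)$ is not immediate and essentially requires an inductive argument of the same shape as the paper's.
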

\begin{proof}
$(1)$ Assume $a$ belongs to~$\Div{\Delta}$ and set $k = \ell(a)$. By Proposition~\ref{prop_simple_lcm1}, there exist $k$ distinct elements~$x_1,\ldots,x_k$ in~$X$ and $k$ elements~$a_1,\ldots, a_k$ of~$M(X,S)$ such that $a = x_ia_i$ for $i = 1,\ldots,k$. Now, assume that no pair~$(x,x_i)$ is a frozen pair. Then we have defining relations $xx_i = y_iz_i$. Moreover $y_i = g_x(x_i)$ and $g_x$ is a bijection so that $g_x(z) = x$ if $(x,z)$ is a frozen pair. Then, all the $y_i$ are distinct and distinct from $x$. It follows that $xa$ has to be left divisible by the lcm of $x,y_1,\cdots, y_k$. But, by Proposition~\ref{prop_simple_lcm1}, the length of this lcm is~$k+1$, that is $\ell(xa)$. Thus $xa$ has to belong to~$\Div{\Delta}$. Hence, since from assumption, $xa$ does not belong to~$\Div{\Delta}$ there is some $i$ in $\{1,\ldots,k\}$ so that $(x,y_i)$ is a frozen pair. Finally, $a$ belongs to $\Div{\Delta}$ and $a_i$ is a factor of~$a$, therefore, $a_i$ belongs to~$\Div{\Delta}$.\\
$(2)$ It follows from Proposition~\ref{prop_simple_lcm1} that frozen words does not belong to~$\Div{\Delta}$: they have length two and only one left divisor. As $\Div{\Delta}$ is closed by factors, no expression of an element in~$\Div{\Delta}$ can contain a frozen word. Conversely, assume $a$ is not in $\Div{\Delta}$ and write $a = x_1\cdots x_k$ with $x_1,\ldots,x_k$ in $X$. As $x_k$ is in $\Div{\Delta}$ and $a$ is not, there is a subscript~$i$ such that $x_{i+1}\cdots x_k$ is in $\Div{\Delta}$ whereas $x_i\cdots x_k$ is not. By (1), there exist $y_{i+1},\ldots,y_k$ in $X$ so that $y_{i+1}\cdots y_{k} = x_{i+1}\cdots x_k$ and $(x_i,y_{i+1})$ is a frozen word. But $a = x_1\cdots x_iy_{i+1}\cdots y_{k}$. Hence, there is an expression of $a$ that contains a frozen word.\\
 (3) We prove the result by induction on $k$. If $k = 0$, there is nothing to prove. Assume $k\geq 1$. Let $y_1$ be in $X$ so that $(x,y_1)$ is a frozen pair. Since $xa$ belongs to~$\Div{\Delta}$, it follows from (2) that~$y_1$ and $z_1$ have to be distinct, and from~(1) that both left divide~$ay$. As the map $g_{z_1}: X\to X$ is a bijection, and $M(X,S)$ is cancellative, there exists a unique pair $(y_2, x_1)$ of element of $X$, so that $S(z_1,y_2) = (y_1,x_1)$. This imposes that $(z_1,y_2)$ is not a frozen pair,  $z_1y_2$ and $y_1x_1$ are equal in $M(X,S)$, and are the right lcm of $y_1$ and $z_1$ by Proposition~\ref{prop_simple_lcm1}. Hence, by cancellativity, $y_2$ left divides~$z_2\cdots z_k y$. Denote by~$x'_2$ the unique element of $X$ so that $(x'_2,y_2)$ is a frozen pair. Then, $x'_2z_2\cdots z_k y$ is not in $\Div{\Delta}$ by~(2) and $z_2\cdots z_k y$ is in $\Div{\Delta}$  since it right divides $ay$. Now, we claim that $x'_2z_2\cdots z_k$ lies in $\Div{\Delta}$ too. Otherwise, by (1), we could write $z_2\cdots z_k = y_2a'$ and we would have $xa = xz_1y_2a' = xy_1x_1a'$, which is impossible as $xa$ belongs to~$\Div{\Delta}$. So, we are in position to apply the induction hypothesis: there exist $y_3,\ldots,y_{k+1}$ in $X$ and $x_2,\ldots,x_k$ in $X$ so that $y_{k+1} = y$, $S(z_i,y_{i+1}) = (y_i,x_i)$, and no $(z_i,y_{i+1})$ is a frozen pair. \end{proof}

\begin{prop}\label{prop_nombre_simple}
  (1) If $a$ is in~$\Div{\Delta}$ then it has $\ell(a)!$ representative words.\\
  (2) The number of simple elements of length $k$ is $\frac{n!}{(n-k)!k!}$ and so the cardinality of~$\Div{\Delta}$ is~$2^n$.   \end{prop}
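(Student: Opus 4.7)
The plan is to prove (1) by straightforward induction on $\ell(a)$, and (2) by deriving the recurrence $(k+1)s_{k+1} = (n-k)s_k$ for the count $s_k$ of simple elements of length~$k$, via a double-count based on the frozen-pair structure.

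For (1), I will induct on $k = \ell(a)$, the case $k=0$ being trivial. In the inductive step, Proposition~\ref{prop_simple_lcm1} gives $|X_l(a)| = k$, and every word representative of $a$ begins with some $y_1 \in X_l(a)$. Conversely, for each $x \in X_l(a)$, cancellativity produces a unique $b_x \in M(X,S)$ with $a = x b_x$; as a right factor of a simple element, $b_x$ lies in $\Div{\Delta}$ and has length $k-1$. Word representatives of $a$ starting with $x$ correspond bijectively to word representatives of $b_x$, of which there are $(k-1)!$ by induction. Summing over the $k$ choices of $x$ gives $k\cdot(k-1)! = k!$.

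For (2), I will use the map $\sigma\colon X \to X$ sending each $x$ to the unique $\sigma(x) \in X$ with $(x,\sigma(x))$ frozen; by the remark following Definition~\ref{def_struct_gp}, each $y \in X$ also admits a unique $z$ with $(z,y)$ frozen, so $\sigma$ is a bijection. The key claim is: for a simple element $a$ of length~$k$ and $x \in X$, the product $xa$ is simple of length $k+1$ if and only if $\sigma(x) \notin X_l(a)$. Indeed, if $xa \notin \Div{\Delta}$, Proposition~\ref{prop_simple_lcm2}(1) yields $y \in X$ and $b \in \Div{\Delta}$ with $a = yb$ and $(x,y)$ frozen, forcing $y = \sigma(x) \in X_l(a)$; conversely, if $a = \sigma(x) b$, then $xa = x\sigma(x)b$ is an expression containing the frozen word $x\sigma(x)$, hence $xa \notin \Div{\Delta}$ by Proposition~\ref{prop_simple_lcm2}(2).

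Since $\sigma$ is bijective and $|X_l(a)| = k$, exactly $n-k$ elements $x \in X$ extend $a$ to a simple element of length $k+1$ on the left. Dually, each simple $a'$ of length $k+1$ factors as $xa$ in precisely $|X_l(a')| = k+1$ ways. Double-counting the set of pairs $(x,a)$ with $a$ simple of length~$k$ and $xa$ simple of length~$k+1$ yields $(k+1)s_{k+1} = (n-k)s_k$. From $s_0 = 1$ this recurrence gives $s_k = \binom{n}{k}$ by induction, and summation produces $|\Div{\Delta}| = \sum_{k=0}^{n} \binom{n}{k} = 2^n$. The main subtle point is establishing the two-sided characterization of when $xa$ is simple; once the bijectivity of $\sigma$ and this characterization are settled, the recurrence and binomial count follow immediately.
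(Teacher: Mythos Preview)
Your proof of (1) is essentially identical to the paper's: both induct on $\ell(a)$, using Proposition~\ref{prop_simple_lcm1} to count the $\ell(a)$ atoms that can begin a word for $a$, and the induction hypothesis to count the $(\ell(a)-1)!$ continuations.

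For (2) your argument is correct but takes a genuinely different route. The paper simply invokes Proposition~\ref{prop_simple_lcm1} directly: the map $a \mapsto X_l(a)$ is a bijection from simple elements of length $k$ to $k$-element subsets of $X$, since each simple element equals the right lcm of its set of left atomic divisors. This gives $s_k = \binom{n}{k}$ in one line. You instead derive the recurrence $(k+1)s_{k+1} = (n-k)s_k$ by double-counting left extensions, relying on Proposition~\ref{prop_simple_lcm2} and the frozen-pair bijection $\sigma$ to characterise exactly when $xa$ remains simple. Your approach avoids invoking the surjectivity of $a \mapsto X_l(a)$ (i.e., that every subset of $X$ is the left-atom set of some simple element), trading it for a more hands-on inductive count; the paper's approach is shorter but leans more heavily on the full strength of Proposition~\ref{prop_simple_lcm1}. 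Both are valid, and the recurrence argument is a nice alternative, though arguably more work than necessary given what Proposition~\ref{prop_simple_lcm1} already provides.
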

\begin{proof}
$(1)$ This is immediate by induction on $\ell(a)$. If $\ell(a) = 1$, then $a$ is an atom and has a unique representative word : no relation can be applied to this representative word. Now, assume~$\ell(a)\geq 2$. Let $x$ be in $X$ that is a left divisor of $a$, and write $a = xa_1$ with $a_1$ in~$M(X,S)$. Then $a_1$ belongs to~$\Div{\Delta}$ and has length~$\ell(a)-1$. By the induction hypothesis, $a_1$ has $(\ell(a)-1)!$ representative words. Thus, there is $(\ell(a)-1)!$ word representatives of $a$ that start with~$x$. But by Proposition~\ref{prop_simple_lcm1}, $a$ has $\ell(a)$ distinct left divisors in~$X$ (each of them has a unique representative word). Then  $a$ has $\ell(a)\times (\ell(a)-1)!$, that is $\ell(a)!$  representative words.\\
$(2)$ It follows from Proposition \ref{prop_simple_lcm1}$(1)$ that there is a bijection between the subsets of $X$ whose  cardinality is equal to~$k$ and the elements of~$\Div{\Delta}$ whose length is $k$. The result follows.
\end{proof}

%%%%%%%%%%%%%%%%%%%%%%%%%%%%%%%%%%%%%%%%%%%%%%%%%%%%%%
\subsection{A representation of~$G(X,S)$}
%%%%%%%%%%%%%%%%%%%%%%%%%%%%%%%%%%%%%%%%%%%%%%%%%%%%%%
We are now ready to define the representation~$\KK$ of~$G(X,S)$. We are going to define a first representation~$\HH$, that will not be suitable, and then modify it to obtain the expected representation~$\KK$. In the sequel we denote by $\HH_x:X\to X$ the map $f_x^{-1}$. So, for every $x,y$ in $X$, one has $\HH_x(y)x = \HH_y(x)y$.

So, be definition, $y$ right divides $\HH_x(y)x$ in $M(X,S)$ and, for $y\neq x$, the element $\HH_x(y)x$ is the left lcm of $x$ and $y$ in $M(X,S)$. Moreover, for any $x$ in $X$, the pair $(\HH_x(x),x)$ is frozen. It will be convenient to use the diagramatic representation of Figure~\ref{diagramHmap}. Now, we extend the map $\HH: x\to \HH_x$ to a morphism~$\HH : w\to \HH_w$ of the free group on $X$  to the symmetric group~$\mathfrak{S}(X)$ on $X$. So, for a word~$w = x^{\varepsilon_1}_1 x^{\varepsilon_2}_2\cdots x^{\varepsilon_k}_k$ with $x_1,\ldots,x_k$ in $X$ and~$\varepsilon_1,\ldots,\varepsilon_k$ in $\{\pm 1\}$, we have $\HH_w = \HH^{\varepsilon_1}_{x_1}\circ \HH^{\varepsilon_2}_{x_2}\circ\cdots \HH^{\varepsilon_k}_{x_k}$. In the sequel the following easy result will be useful.

\begin{lem}\label{lem_fct_H} Let $y \in X$ and $w \in~X^*$. Then, in $M(X,S)$, the element~$y$ right divides the element represented by the word $\HH_w(y)w$. Moreover, if $y$ does not right-divide $w$, then $\HH_w(y)w$ is the left lcm of $y$ and the element represented by $w$.
\end{lem}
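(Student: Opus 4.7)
The plan is to prove Lemma~\ref{lem_fct_H} by induction on the word length $|w|$: part (1) comes directly from the induction, and part (2) is then deduced from part (1) by a short length argument. The base cases $|w|\leq 1$ are immediate. For $|w|=0$, $\HH_w$ is the identity, so $\HH_w(y)w=y$ in $M(X,S)$, and $y$ trivially right divides itself and is the left lcm of $y$ and $1$ (the hypothesis holds vacuously since $M(X,S)$ has no nontrivial invertible elements). For $|w|=1$, say $w=x$, both assertions are precisely the reformulation of the defining relations recalled immediately before the lemma, with the observation that $y$ fails to right divide the atom $x$ exactly when $y\neq x$.

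For the inductive step of part (1), I would factor $w=w'z$ with $z\in X$, set $y':=\HH_z(y)$, and use $\HH_w=\HH_{w'}\circ\HH_z$ to rewrite $\HH_w(y)=\HH_{w'}(y')$. Applying the induction hypothesis to the pair $(w',y')$ yields some $u\in M(X,S)$ with $\HH_{w'}(y')\,w'=u\,y'$, while the base case $|w|=1$ applied to $(z,y)$ yields some $v\in M(X,S)$ with $y'\,z=v\,y$ (namely $v=\HH_y(z)$ if $y\neq z$ and $v=\HH_z(z)$ if $y=z$). Combining the two equalities,
\[
 \HH_w(y)\,w \;=\; \HH_{w'}(y')\,w'\,z \;=\; u\,y'\,z \;=\; u\,v\,y
\]
in $M(X,S)$, so $y$ right divides $\HH_w(y)\,w$.

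For part (2), I would avoid a separate induction and argue instead with the additive length function $\ell$ on $M(X,S)$, which exists because the defining relations are homogeneous. The element $\HH_w(y)\,w$ has length $|w|+1$ and, by part (1), admits both $y$ and the element represented by $w$ as right divisors. Let $L$ denote the left lcm of $y$ and $w$; then $L$ right divides $\HH_w(y)\,w$, so $\ell(L)\leq|w|+1$. On the other hand, $w$ right divides $L$, giving $\ell(L)\geq|w|$, and equality would force $L=w$ by cancellativity, contradicting the assumption that $y$ does not right divide $w$. Hence $\ell(L)=|w|+1=\ell(\HH_w(y)\,w)$, and cancellativity forces $L=\HH_w(y)\,w$. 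The one delicate point in the whole argument is the bookkeeping in the inductive step of part (1), namely checking that the functorial identity $\HH_w=\HH_{w'}\circ\HH_z$ is applied correctly and that the decompositions $\HH_{w'}(y')\,w'=u\,y'$ and $y'\,z=v\,y$ assemble as written; once that is in place, part (2) is essentially automatic.
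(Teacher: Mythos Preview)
Your proof is correct and follows essentially the same approach as the paper, which simply says the result is proved by induction on the length of $w$ and leaves the details to the reader; you have supplied exactly those details. Your treatment of part~(2) via the additive length function is a clean shortcut compared to carrying the lcm statement through the induction, but it is a minor stylistic variation rather than a different method.
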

\begin{proof} If $w$ belongs to $X$, then the result holds as remarked above. Now, it is easy to prove the result by induction on the length of the word~$w$ (see Figure~\ref{diagramHmap}), and we left the details to the reader.
\end{proof}

\begin{figure}[ht]
\begin{picture}(270,50)
\put(18,8){\includegraphics[scale = 0.5]{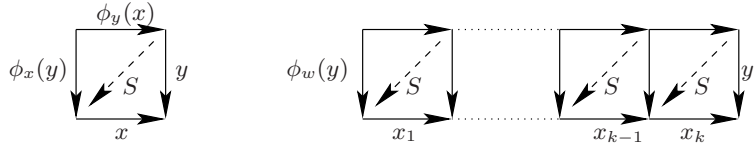}}
\put(35,3){{\small $x$}} \put(28,48){{\small$\HH_y(x)$}}\put(-5,27){{\small$\HH_x(y)$}}\put(58,27){{\small $y$}}\put(38,20){{\small$S$}}

\put(140,3){{\small $x_1$}}\put(216,3){{\small $x_{k-1}$}}\put(249,3){{\small $x_k$}} \put(100,27){{\small$\HH_w(y)$}}\put(272,27){{\small $y$}}
\put(145,20){{\small$S$}} \put(220,20){{\small$S$}}\put(254,20){{\small$S$}}
\end{picture}
\caption{the $\HH$-maps of $x\in X$ and of $w = x_1\cdots x_k$.}
\label{diagramHmap}
\end{figure}
\begin{prop} \label{propHmap} The morphism $\HH:w\to \HH_w$ induces a morphism from~$G(X,S)$ to $\mathfrak{S}(X)$, the symmetric group on $X$.
\end{prop}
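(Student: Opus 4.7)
The plan is to show that the morphism $\HH$ on the free group $F(X)$ sending each $x$ to $\HH_x = f_x^{-1}$ factors through $G(X,S)$, by verifying that it kills every defining relation of the presentation~(\ref{equation:structuregroup1}). Since $(X,S)$ is non-degenerate, each $f_x$ is a bijection on $X$, so the homomorphism $\HH: F(X) \to \mathfrak{S}(X)$ is well defined; what remains is to establish the equality of permutations
\[\HH_x \circ \HH_y = \HH_{g_x(y)} \circ \HH_{f_y(x)}\]
for every defining relation $xy = g_x(y) f_y(x)$ of~(\ref{equation:structuregroup1}).

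Inverting both sides, this reduces to the identity of maps $X \to X$
\[f_y \circ f_x = f_{f_y(x)} \circ f_{g_x(y)},\]
which I would derive from the braided equation $S^{12} S^{23} S^{12} = S^{23} S^{12} S^{23}$ that $S$ satisfies on $X^3$. A direct expansion on a generic triple $(w,x,y)$ is routine: one checks that the third component of $S^{12} S^{23} S^{12}(w,x,y)$ equals $f_y(f_x(w))$, while the third component of $S^{23} S^{12} S^{23}(w,x,y)$ equals $f_{f_y(x)}(f_{g_x(y)}(w))$. Equating these for all $w \in X$ yields precisely the desired identity.

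An alternative strategy, more consonant with the diagrammatic style of Lemma~\ref{lem_fct_H}, would fix $z \in X$ and argue as follows: whenever $z$ does not right-divide the common element $g = xy = g_x(y) f_y(x)$ of $M(X,S)$, the lemma exhibits both $\HH_{xy}(z) \cdot xy$ and $\HH_{g_x(y) f_y(x)}(z) \cdot g_x(y) f_y(x)$ as the left lcm of $z$ and $g$, so cancellativity in $M(X,S)$ forces $\HH_{xy}(z) = \HH_{g_x(y) f_y(x)}(z)$. The main obstacle with this second approach is the residual case $z \in \{y, f_y(x)\}$, where the lcm argument collapses because $z$ already right-divides $g$, and these two values would have to be treated separately using the explicit form of the relations. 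For that reason I would favour the braid-relation proof, which dispatches every $z \in X$ uniformly in a single calculation.
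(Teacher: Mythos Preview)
Your proof is correct. The braid-relation computation you give is exactly the argument the paper attributes to~\cite{etingof}: the identity $f_y\circ f_x = f_{f_y(x)}\circ f_{g_x(y)}$ is read off the third coordinate of $S^{12}S^{23}S^{12}=S^{23}S^{12}S^{23}$, and inverting gives the required relation on the $\HH_x$.

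The paper, however, chooses to supply a different ``direct'' proof working entirely inside the monoid $M(X,S)$. It follows precisely the alternative you sketch: for $z\notin\{x_2,x'_2\}$ (in the notation $S(x_1,x_2)=(x'_1,x'_2)$) it uses Lemma~\ref{lem_fct_H} to identify both $\HH_{x_1x_2}(z)\,x_1x_2$ and $\HH_{x'_1x'_2}(z)\,x'_1x'_2$ with the left lcm of $z$, $x_2$, $x'_2$, and then cancels. The residual case $z\in\{x_2,x'_2\}$ that you correctly flag as the obstacle is handled there by an explicit argument with frozen pairs and Proposition~\ref{prop_simple_lcm1}, occupying the bulk of the proof. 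Your uniform braid-relation argument is shorter and avoids this case split; the paper's approach, by contrast, stays within the Garside/divisibility toolkit that is used throughout the rest of the article, and in particular rehearses techniques (frozen pairs, counting left divisors of length-$3$ elements) that recur in the proof of the key Proposition for $\KK$.
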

It was proved in~\cite{etingof} that the map~$x\mapsto f_x$ defines a right action of~$G(X,S)$ on the set~$X$. The above result easily follows. For completeness, we provide a direct proof.
\begin{proof} Assume $S(x_1,x_2) = (x'_1,x'_2)$ with $x_1,x_2$ in $X$ and $x_1\neq x'_1$. We need to prove that~$\HH_{x_1x_2} = \HH_{x'_1x'_2}$. If $y\neq x_2$ and $y\neq x'_2$, then $y$ cannot right divide $x_1x_2$ by Proposition~\ref{prop_simple_lcm1}. From lemma \ref{lem_fct_H}, this implies that $\HH_{x_1x_2}(y)x_1x_2$ and $\HH_{x'_1x'_2}(y)x'_1x'_2$  are  both equal to the left lcm of $y, x_2$ and $x'_2$. Therefore,  we have $\HH_{x_1x_2}(y)x_1x_2 = \HH_{x'_1x'_2}(y)x'_1x'_2$, and by cancellativity, $\HH_{x_1x_2}(y)$ and $\HH_{x'_1x'_2}(y)$. Assume now $y = x_2$ or $y = x'_2$. As $S$ is involutive, we may assume $y = x_2$. Then $\HH_{x'_1x'_2}(x_2) = \HH_{x'_1}(\HH_{x'_2}(x_2)) = \HH_{x'_1}(x'_1) = y'_1$ so that~$(y'_1,x'_1)$ is a frozen pair (see Figure~\ref{diagramHmap2}). Let $y'_2$ be in $X$ so that $(y'_2,x_2)$ is a frozen pair. Then, we have $\HH_{x_1x_2}(x_2) = \HH_{x_1}(y'_2)$ (see Figure~\ref{diagramHmap2}). Since $(x_1,x_2)$ is not a frozen pair, we have $x_1\neq y'_2$. Using that $\HH_{x_1}(y'_2)x_1 = \HH_{y'_2}(x_1)y'_2$, we get $\HH_{x_1}(y'_2) \neq \HH_{y'_2}(x_1)$. Now, by Proposition~\ref{prop_simple_lcm1}, the element $\HH_{x_1}(y'_2)x_1x_2$ of $M(X,S)$,  that is equal to $\HH_{y'_2}(x_1)y'_2x_2$, is not in~$\Div{\Delta}$ because one of its representing words contains a frozen word, namely $y'_2x_2$. Since its length is $3$, still by Proposition~\ref{prop_simple_lcm1}, its left divisors in $M(X,S)$ that belong to $X$ are $\HH_{x_1}(y'_2)$ and $\HH_{y'_2}(x_1)$ only. But, we have $\HH_{x_1}(y'_2)x_1x_2 = \HH_{x_1}(y'_2)x'_1x'_2 = y''_2x''_1x'_2$ with $(y''_2,x''_1) = S(\HH_{x_1}(y'_2),x'_1)$. Thus $y''_2$ is either equal to $\HH_{x_1}(y'_2)$ or to $\HH_{y'_2}(x_1)$.  Assume the second case holds. Then, from the equality~$y''_2x''_1 = \HH_{x_1}(y'_2)x'_1$, it follows that $\HH_{y'_2}(x_1)x''_1 = \HH_{x_1}(y'_2)x'_1$. But, we have also~$\HH_{y'_2}(x_1)y'_2 = \HH_{x_1}(y'_2)x_1$, and $\HH_{x_1}(y'_2)x_1$ is the right lcm of $\HH_{y'_2}(x_1)$ and $\HH_{x_1}(y'_2)$.   we conclude that  $x_1 =  x'_1$, a contradiction. Therefore, $y''_2 = \HH_{x_1}(y'_2)$. Since we have $(y''_2,x''_1) = S(\HH_{x_1}(y'_2),x'_1)$, this means that $(\HH_{x_1}(y'_2),x'_1)$ is a frozen pair. Using that $(y'_1,x'_1)$ is a frozen pair, we obtain~$\HH_{x_1}(y'_2) = y'_1$, that is $\HH_{x_1x_2}(x_2) = \HH_{x'_1x'_2}(x_2)$.
\end{proof}
\begin{figure}[ht]
\begin{picture}(270,50)
\put(18,8){\includegraphics[scale = 0.5]{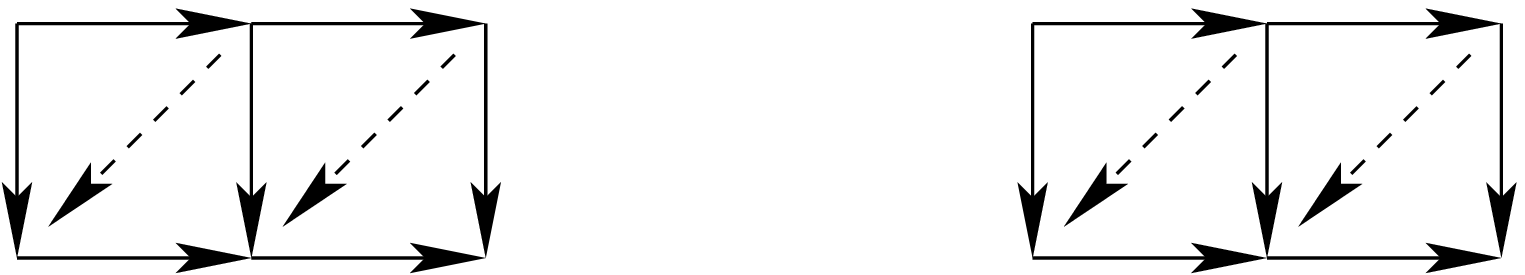}}
\put(35,2){{\small $x'_1$}}\put(68,2){{\small $x'_2$}}\put(38,20){{\small$S$}}\put(71,20){{\small$S$}} \put(35,48){{\small$y'_1$}}\put(11,27){{\small$y'_1$}}
\put(68,48){{\small$x_1$}}\put(56,27){{\small $x'_1$}} \put(91,27){{\small $x_2$}}

\put(180,3){{\small $x_1$}}\put(213,3){{\small $x_2$}}\put(183,20){{\small$S$}}\put(216,20){{\small$S$}} \put(169,48){{\small $\HH_{y'_2}\!(x_1)$}}\put(135,27){{\small $\HH_{x_1}\!(y'_2)$}}
\put(213,48){{\small$y'_2$}}\put(203,27){{\small $y'_2$}} \put(237,27){{\small $x_2$}}
\end{picture}
\caption{$\HH_{x_1x_2}(x_2)$ and $\HH_{x'_1x'_2}(x_2)$.}
\label{diagramHmap2}
\end{figure}

In the sequel, we still denote by~$\HH: G(X,S)\to GL(V),\ g\mapsto \HH_g$ the morphism induced by $\HH:w\mapsto \HH_w$.  As $\mathfrak{S}(X)$ is a finite group, regarding the main question we address in this article, one may wonder whether the image of $G(X,S)$ in $\mathfrak{S}(X)$ by the morphism~$\HH$ is a generating finite group for $M(X,S)$. However, it is easy to see that the restriction to~$\Div{\Delta}$ of the map $\HH$ is not into in general. Consider for instance $M(X,S) = \langle x,y\mid xy = yx\rangle$. Then $\HH_x = \HH_y = Id_X$.  On the other hand, recall from the introduction that $X$ is a base of a finite dimensional vector space~$V$. So, our strategy will be to see $\HH_w$ as an element of $GL(V)$ and to modify the map~$\HH$ in order to obtain a generating group as a finite subgroup of $GL(V)$.

 For $x$ on $X$, we define $\KK_x: V\to V$ to be the linear map defined on the base~$X$ of $V$ by $$\begin{array}{ll}\KK_x(y) = \HH_x(y)&\textrm{ for }y\neq x;\\\KK_x(x) = -\HH_x(x).\end{array}$$  Note that for every $x$ in $X$, the linear map~$\KK_x$ belongs to $GL(V)$, as its matrix in the base~$X$ is monomial (it has a unique non zero entry on each line and on each colomn), with non-zero entries equal to $\pm 1$. Its determinant is therefore~$\pm 1$. As for $\HH$,  we extend the map $\KK: x\to \KK_x$ to a morphism~$\KK : w\to \KK_w$ of the free group on $X$  to~$GL(V)$. So, for a word~$w = x^{\varepsilon_1}_1 x^{\varepsilon_2}_2\cdots x^{\varepsilon_k}_k$ with $x_1,\ldots,x_k$ in $X$ and~$\varepsilon_1,\ldots,\varepsilon_k$ in $\{\pm 1\}$, we have $\KK_w = \KK^{\varepsilon_1}_{x_1}\circ \KK^{\varepsilon_2}_{x_2}\circ\cdots \KK^{\varepsilon_k}_{x_k}$. For every $w$ in the free group on $X$, the linear map $\KK_w$ is in $GL(V)$ with determinant equal to $\pm 1$. Our first objective is to prove that

\begin{prop} The map $\KK: w\to \KK_w$ induces a linear representation of $G(X,S)$ in $GL(V)$ whose image is a finite group.
\end{prop}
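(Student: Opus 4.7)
The plan is in two steps. First, verify that $\KK$ respects the defining relations of $G(X,S)$, so that it descends to a group homomorphism $G(X,S) \to GL(V)$. Second, observe that the image lies in the finite group of signed permutations of the basis~$X$.

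The key structural observation for the first step is that each $\KK_x$ factors as $\KK_x = \HH_x \circ D_x$, where $D_x \in GL(V)$ is the diagonal operator sending $x\mapsto -x$ and fixing the other basis vectors; this is immediate from the definition of $\KK_x$. For a defining relation $xy = x'y'$ with $S(x,y) = (x',y')$ non-frozen, Proposition~\ref{propHmap} already gives $\HH_x \HH_y = \HH_{x'} \HH_{y'}$, so only the sign contributions remain to be matched. I will exploit the trivial conjugation identity $\HH_y^{-1} D_x \HH_y = D_{\HH_y^{-1}(x)}$, valid because $\HH_y$ permutes the basis $X$. Since $y' = f_y(x)$ and $\HH_y = f_y^{-1}$, we obtain $\HH_y(y') = x$, hence $D_x \HH_y = \HH_y D_{y'}$; by the involutivity $S\circ S = \mathrm{Id}$ the symmetric identity $D_{x'} \HH_{y'} = \HH_{y'} D_y$ also holds. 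Combining these,
\begin{equation*}
\KK_x \KK_y = \HH_x D_x \HH_y D_y = \HH_x \HH_y\, D_{y'} D_y = \HH_{xy}\, D_y D_{y'},
\end{equation*}
where the last equality uses that the diagonal operators $D_y$ and $D_{y'}$ commute. The same computation applied to the right-hand side yields $\KK_{x'} \KK_{y'} = \HH_{x'y'} D_y D_{y'}$, and since $\HH_{xy} = \HH_{x'y'}$ we conclude $\KK_x \KK_y = \KK_{x'} \KK_{y'}$.

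For the finiteness of the image, each $\KK_x$ is by construction a signed permutation of $X$, i.e.\ an element of the hyperoctahedral group $\{\pm 1\}^X \rtimes \mathfrak{S}(X) \subset GL(V)$, which is a finite group of order $2^n n!$. Since products and inverses of signed permutations are again signed permutations, the image of $\KK$ is contained in this finite subgroup.

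I do not anticipate a real obstacle: the whole argument reduces to the conjugation identity for sign matrices together with the key equality $\HH_y(y') = x$, which comes directly from $y' = f_y(x)$. A more pedestrian alternative would be to check $\KK_x \KK_y (z) = \KK_{x'} \KK_{y'}(z)$ pointwise by splitting on the cases $z=y$, $z=y'$, and $z\notin\{y,y'\}$, using Lemma~\ref{lem_fct_H} at each step; this works equally well but obscures the underlying conjugation structure.
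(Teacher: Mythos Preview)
Your proof is correct. The factorization $\KK_x = \HH_x D_x$ together with the conjugation rule $D_x \HH_y = \HH_y D_{\HH_y^{-1}(x)} = \HH_y D_{y'}$ (using $\HH_y^{-1} = f_y$ and $y' = f_y(x)$) is exactly right, and the involutivity of $S$ gives the symmetric relation $D_{x'}\HH_{y'} = \HH_{y'}D_y$, so both $\KK_x\KK_y$ and $\KK_{x'}\KK_{y'}$ collapse to $\HH_{xy}\,D_yD_{y'}$. The finiteness argument via the hyperoctahedral group is also fine and is equivalent to the paper's.

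The paper, by contrast, takes precisely the route you describe as the ``pedestrian alternative'': it verifies $\KK_{x_1x_2}(y) = \KK_{x'_1x'_2}(y)$ pointwise, splitting into the cases $y\notin\{x_2,x'_2\}$ (where no sign appears and one reduces directly to Proposition~\ref{propHmap}) and $y\in\{x_2,x'_2\}$ (where one tracks the single sign change through by hand, invoking the frozen-pair structure). Your approach is genuinely cleaner: it separates the permutation layer, already handled by Proposition~\ref{propHmap}, from the diagonal sign layer, and makes the equality of signs manifest as the symmetry $D_yD_{y'} = D_{y'}D_y$. It also exhibits the image of $\KK$ transparently as a subgroup of the semidirect product $\{\pm1\}^X\rtimes\mathfrak{S}(X)$. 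The paper's case analysis, on the other hand, has the minor advantage of staying close to the combinatorics of $S$ and of not requiring the reader to internalize conjugation of diagonal matrices by permutation matrices; but your argument is shorter and structurally more informative.
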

\begin{proof} For every $x$ in $X$, the linear map~$\KK_x$ permutes the set $\{\pm x\mid x\in X\}$. So for every element~$w$ of $G(X,S)$, the linear map~$\KK_w$ permutes $\{\pm x\mid x\in X\}$ too. Since $X$ is a basis for $V$, the image of $\KK$ is finite.  Assume $S(x_1,x_2) = (x'_1,x'_2)$ with $x_1,x_2$ in $X$ and $x_1\neq x'_1$. As for Proposition~\ref{propHmap}, we need to prove that~$\KK_{x_1x_2} = \KK_{x'_1x'_2}$. Let $y$ be in $X$, and let us prove that~$\KK_{x_1x_2}(y) = \KK_{x'_1x'_2}(y)$. If $y\neq x_2$ and $y\neq x'_2$, then $\KK_{x_2}(y) = \HH_{x_2}(y)$  and $\KK_{x'_2}(y) = \HH_{x'_2}(y)$. We have  $S(\HH_{x_2}(y), x_2) = (\HH_{y}(x_2),y)$. If $\HH_{x_2}(y)$ was equal to $x_1$, we would have $S(\HH_{x_2}(y), x_2) = (x'_1,x'_2)$, a contradiction since $y\neq x'_2$. Thus, $\HH_{x_2}(y)\neq x_1$ and, therefore, $\KK_{x_1}(\KK_{x_2}(y)) = \KK_{x_1}(\HH_{x_2}(y)) = \HH_{x_1}(\HH_{x_2}(y))$. Similarly, $\KK_{x'_2}(y)\neq x'_1$ and $\KK_{x'_1}(\KK_{x'_2}(y)) = \KK_{x'_1}(\HH_{x'_2}(y)) =\HH_{x'_1}(\HH_{x'_2}(y))$. Therefore, $\KK_{x_1x_2}(y) = \KK_{x_1}(\KK_{x_2}(y)) = \HH_{x_1}(\HH_{x_2}(y)) = \HH_{x_1x_2}(y) = \HH_{x'_1x'_2}(y) =  \HH_{x'_1}(\HH_{x'_2}(y)) = \KK_{x_1}(\KK_{x_2}(y)) = \KK_{x'_1x'_2}(y)$. Now assume that $y = x_2$ or $y = x'_2$.  As $S$ is involutive, we may assume without restriction that $y = x_2$. Let $y_2$ be in $X$ so that $(y_2,x_2)$ is a frozen pair. We have $\KK_{x'_2}(x_2) = \HH_{x'_2}(x_2) = x'_1$. Therefore $\KK_{x'_1x'_2}(x_2) = \KK_{x'_1}(x'_1) = -\HH_{x'_1}(x'_1) = -\HH_{x'_1}(\HH_{x'_2}(x_2)) = -\HH_{x'_1x'_2}(x_2)$. On the other hand, $y_2\neq x_1$  since $(y_2,x_2)$ is a frozen pair whereas $(x_1,x_2)$ is not. Moreover, $\KK_{x_2}(x_2) = -\HH_{x_2}(x_2) = -y_2$.  Therefore, $\KK_{x_1x_2}(x_2) = -\KK_{x_1}(y_2) = -\HH_{x_1}(y_2) = -\HH_{x_1}(\HH_{x_2}(x_2)) = -\HH_{x_1x_2}(x_2)$. By Proposition~\ref{propHmap}, we deduce that~$\KK_{x_1x_2}(x_2) = \KK_{x'_1x'_2}(x_2)$.
\end{proof}
In the sequel, we still denote by~$\KK: G(X,S)\to GL(V),\ a\mapsto \KK_a$ the morphism induced by $\KK:w\mapsto \KK_w$. By $W(X,S)$ we denote the subgroup~$\KK(G(X,S))$ of $GL(V)$. So, for every $x$ in $X$ and every $a$ in $G(X,S)$, we have $\KK_a(x) = \pm\HH_a(x)$. For $\rho \in GL(V)$, we set $$n(\rho) = \# \{x\in X\mid \rho(x) \not\in X\}.$$ We turn now to the proof that $W(X,S)$ is a generating generated group for~$M(X,S)$. We denote by $\XX$ the set $\{\KK_x\mid x\in X\}$. Then, $\XX$ is a generating set for the group~$W(X,S)$. It follows from both facts that $W(X,S)$ is a finite group and $\XX$ is a generating set of the group~$W(X,S)$ that $\XX$ is also a generating set of~$W(X,S)$ considered as a monoid (every element of $\XX$ is of finite order, so its inverse is equal to some of its positive power). As a consequence, $$W(X,S) = \KK(M(X,S)).$$

\begin{lem}\label{lem_prepa}
(1) For every~$a,a'$ in~$M(X,S)$, one has~$n(\KK_{aa'}) \leq n(\KK_a)+n(\KK_{a'})$.\\
(2) For every~$a$ in~$M(X,S)$, one has~$\ell_\sX(a) \geq \ell_{\sXX}(\KK_a) \geq n(\KK_a)$.\\
(3) For every~$a$ in~$M(X,S)$ and every~$x$ in~$X$, if~$\KK_a(x) = -\HH_a(x)$ then~$x$ right divides~$a$ in~$M(X,S)$.\\
(4) For every~$a$ in~$M(X,S)$, if $\ell_{\sXX}(\KK_a)) = n(\KK_a)$, then $\KK_a = \KK_b$ for some $b$ in~$\Div{\Delta}$.
\end{lem}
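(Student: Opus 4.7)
The plan is to establish (1)--(3) by direct combinatorial and inductive arguments, and to derive (4) by combining (3) with Garside-theoretic bounds on atomic divisors.

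For (1), I will track the sign under composition. Each $\KK_g$ permutes the signed basis $\{\pm x : x \in X\}$, so $n(\KK_g)$ counts exactly those basis vectors sent to a negative basis vector. Writing $\KK_{aa'}(x) = \KK_a(\KK_{a'}(x))$ and setting $\KK_{a'}(x) = \epsilon\, y$ with $\epsilon \in \{\pm 1\}$ and $y \in X$, the sign of $\KK_{aa'}(x)$ is the product of $\epsilon$ with the sign of $\KK_a$ at $y$. After noting that $x \mapsto |\KK_{a'}(x)|$ is a bijection of $X$ (since $X$ is a basis, $\KK_{a'}$ cannot send two distinct basis vectors to $\pm$ the same element), the set of $x$ with $\KK_{aa'}(x) \notin X$ is the symmetric difference of two sets of sizes $n(\KK_{a'})$ and $n(\KK_a)$, which has cardinality at most their sum.

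Part (2) is a routine consequence. The bound $\ell_\sX(a) \geq \ell_{\sXX}(\KK_a)$ is immediate since an atomic expression of $a$ maps to an expression of $\KK_a$ on $\XX$; the bound $\ell_{\sXX}(\KK_a) \geq n(\KK_a)$ follows by iterating (1), using that $n(\KK_x) = 1$ for every atom $x$.

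For (3), I induct on $\ell(a)$. Writing $a = x_k a'$ and $\KK_{a'}(x) = \epsilon y$ with $\epsilon \in \{\pm 1\}$ and $y \in X$, I split on whether $y = x_k$. When $y \neq x_k$, $\KK_{x_k}$ preserves the sign at $\pm y$, so $\KK_a(x) = \epsilon\,\HH_a(x)$ and the hypothesis forces $\epsilon = -1$; the inductive hypothesis then yields that $x$ right-divides $a'$, and therefore $a$. When $y = x_k$, $\KK_{x_k}$ flips the sign at $\pm x_k$, so $\KK_a(x) = -\epsilon\,\HH_a(x)$, forcing $\epsilon = +1$, i.e.\ $\HH_{a'}(x) = x_k$; then Lemma~\ref{lem_fct_H} gives directly that $x$ right-divides $\HH_{a'}(x)\cdot a' = x_k a' = a$. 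The main subtlety is this second case, where one has to convert an $\HH$-value identity into a right-divisibility statement via Lemma~\ref{lem_fct_H}.

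Finally, for (4), I pick a minimal expression $\KK_a = \KK_{x_k} \circ \cdots \circ \KK_{x_1}$ with $k = \ell_{\sXX}(\KK_a) = n(\KK_a)$, and set $b = x_k \cdots x_1 \in M(X,S)$, so that $\KK_b = \KK_a$ and $\ell(b) = k$ (the defining relations of $M(X,S)$ preserve length). By (3), $\{x \in X : \KK_b(x) \notin X\} \subseteq X_r(b)$, hence $|X_r(b)| \geq n(\KK_b) = \ell(b)$. Conversely, the left lcm of $X_r(b)$ right-divides $b$ and is a simple element of length $|X_r(b)|$ by Proposition~\ref{prop_simple_lcm1}; hence $|X_r(b)| \leq \ell(b)$, with equality characterizing $b \in \Div{\Delta}$. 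Combining both inequalities forces $b \in \Div{\Delta}$, as desired.
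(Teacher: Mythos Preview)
Your proof is correct and follows essentially the same approach as the paper. The only cosmetic differences are that in~(1) you identify the set $\{x:\KK_{aa'}(x)\notin X\}$ as a symmetric difference (slightly sharper than the paper's union bound), and in~(3) you phrase the ``first sign-flip'' argument as an induction on $\ell(a)$ rather than picking out the critical index directly; in~(4) your argument is identical to the paper's.
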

\begin{proof}
(1) Let $a,a'$ lie in~$M(X,S)$, set $X_1 = \{x\in X\mid \KK_{a'}(x)\not\in X\}$ and $X_2 = \KK^{-1}_{a'}(\{x\in X\mid \KK_{a}(x)\not\in X\})$. We have $\# X_1 = n(\KK_{a'})$ and $\# X_2 = n(\KK_{a})$. It is easy to see that $\KK_{aa'}(x)\not\in X$ implies  $x\in X_1\cup X_2$. Therefore~$n(\KK_{aa'}) \leq n(\KK_a)+n(\KK_{a'})$.\\
(2) Let $a$ be in $M(X,S)$. If $a = x_1\cdots x_k$ with $x_1\ldots, x_k$ in $X$, then $\KK_a = \KK_{x_1}\circ\cdots\circ\KK_{x_k}$. Thus,~$\ell_\sX(a) \geq \ell_{\sXX}(\KK_a))$.  Now, $\ell_{\sXX}(\KK_a)) \geq n(\KK_a)$ by (1): if $\KK_a = \KK_{y_1}\circ\cdots\circ\KK_{y_r} = \KK_{y_1\cdots y_r}$ with $r = \ell_\sXX(\KK_a)$ and $y_1,\ldots, y_r$ in $X$, then $n(\KK_a)\leq \sum_{i = 1}^r n(\KK_{y_i}) = r$.\\
(3) Let $a$ lie in $M(X,S)$. Assume that $x$ belongs to~$X$ and is such that~$\KK_a(x) = -\HH_a(x)$. Assume $a = y_1\cdots y_k$ with $y_1,\cdots, y_k$ in $X$.  Set $a_{k+1} = 1$, and for $i = 1,\cdots, k$, set $a_i = y_i\cdots y_k$. Since~$\KK_{a_1}(x) = -\HH_{a_1}(x)$ and $\KK_{a_{k+1}}(x) = \HH_{a_{k+1}}(x)$, there exists $j\in \{1,\cdots, k\}$ such for $\KK_{a_{j+1}}(x) = \HH_{a_{j+1}}(x)$ and $\KK_{a_{j}}(x) = -\HH_{a_{j}}(x)$ that is,  $\KK_{y_j}(\HH_{a_{j+1}}(x)) = -\HH_{y_j}(\HH_{a_{j+1}}(x))$.  By definition of $\KK_{y_j}$, this means that $y_j = \HH_{a_{j+1}}(x)$. By lemma~\ref{lem_fct_H}, $x$ right-divides $\HH_{a_{j+1}}(x)a_{j+1}$, that is $y_ja_{j+1}$, in $M(X,S)$. Hence, it right divides $a_j$, and $a$ in $M(X,S)$.\\
(4) Assume $\ell_{\sXX}(\KK_a)) = n(\KK_a)$ for some~$a$ in~$M(X,S)$. Write $\KK_a = \KK_{y_1}\cdots \KK_{y_k}$ with $k = n(\KK_a)$ and $y_1,\cdots, y_k$ in $X$.  Set $b = y_1\cdots y_k$ in $M(X,S)$. By (3), there are $k$ distinct elements~$x_1,\cdots, x_k$ in $X$ that right divide $b$. Therefore the left lcm of~$x_1,\cdots, x_k$ right divides $b$. But this lcm is in~$\Div{\Delta}$ and its length is~$k$ by Proposition~\ref{prop_simple_lcm1}. Therefore it is equal to~$b$ and the latter belongs to~$\Div{\Delta}$.
\end{proof}

\begin{prop}\label{prop_clef} (1) Let $a$ lie in $\Div{\Delta}$. We have $\ell_\sX(a) = \ell_\sXX(\KK_a) = n(\KK_a)$. Moreover, for $x$ in $X$, $\KK_a(x) = -\HH_a(x)$ if $x$ right divides $a$, and  $\KK_a(x) = \HH_a(x)$ otherwise.\\
(2) The restriction of $\KK$ to $\Div{\Delta}$ is into.\\
(3) The element~$\KK_\Delta$ is $\XX$-balanced in $W(X,S)$ and $\lDiv{\sXX}{\KK_\Delta} = \{\KK_a\mid a\in \Div{\Delta}\}$.\end{prop}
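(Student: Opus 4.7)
My plan is to prove (1) by induction on $\ell(a)$, deduce (2) as an immediate corollary, and derive (3) from (1) together with Lemma~\ref{lem_prepa}.

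For (1), the inductive claim is the ``moreover'' assertion: for $a \in \Div{\Delta}$ and $x \in X$, one has $\KK_a(x) = -\HH_a(x)$ when $x$ right-divides $a$ and $\KK_a(x) = \HH_a(x)$ otherwise. Once this is established, $n(\KK_a)$ is at least the number of right divisors of $a$ in $X$, which equals $\ell(a) = \ell_\sX(a)$ by Proposition~\ref{prop_simple_lcm1}, and Lemma~\ref{lem_prepa}(2) supplies the reverse chain $\ell_\sX(a) \geq \ell_\sXX(\KK_a) \geq n(\KK_a)$, forcing equality throughout. For the induction itself, fix $x \in X$ and split according to whether $x$ right-divides $a$. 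If it does, choose $z = x$, write $a = a'z$ with $a' \in \Div{\Delta}$ (closure under factors), and compute $\KK_a(x) = -\KK_{a'}(\HH_z(z))$; induction applied at $a'$ and $\HH_z(z)$ yields $\KK_{a'}(\HH_z(z)) = \HH_{a'}(\HH_z(z)) = \HH_a(x)$, provided $\HH_z(z)$ does not right-divide $a'$, which holds because $(\HH_z(z),z)$ is frozen while $a \in \Div{\Delta}$ admits no frozen subword in any expression (Proposition~\ref{prop_simple_lcm2}(2)). If $x$ does not right-divide $a$, pick any $z \in X_r(a)$ (necessarily $z \neq x$) and compute $\KK_a(x) = \KK_{a'}(\HH_z(x))$; induction yields $\KK_{a'}(\HH_z(x)) = \HH_{a'}(\HH_z(x)) = \HH_a(x)$, provided $\HH_z(x)$ does not right-divide $a'$. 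Were $\HH_z(x)$ to right-divide $a'$, then $\HH_z(x)z = \HH_x(z)x$ (the left lcm of $x$ and $z$) would right-divide $a$, forcing $x$ to right-divide $a$, a contradiction.

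For (2), if $\KK_a = \KK_b$ with $a, b \in \Div{\Delta}$, part~(1) identifies $\{x \in X : \KK_a(x) \notin X\}$ both with $X_r(a)$ and with $X_r(b)$; so $a$ and $b$ share the same right divisors in $X$, and Proposition~\ref{prop_simple_lcm1} recovers each as the left lcm of this common set, forcing $a = b$.

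For (3), every $\KK_a$ with $a \in \Div{\Delta}$ is both a left and a right $\XX$-factor of $\KK_\Delta$: the balance of $\Delta$ together with closure of $\Div{\Delta}$ under factors produce $\Delta = a b_1 = b_2 a$ with $b_1, b_2 \in \Div{\Delta}$, and part~(1) gives $\ell_\sXX(\KK_a) + \ell_\sXX(\KK_{b_i}) = n = \ell_\sXX(\KK_\Delta)$. Conversely, if $\KK_\Delta = w w'$ exhibits $w$ as a left $\XX$-factor with $\ell_\sXX(w) = k$, Lemma~\ref{lem_prepa}(1) applied to this factorization, together with the fact that $n(\KK_\Delta) = n$ (immediate from part~(1) since every $x \in X$ right-divides $\Delta$), pins down $n(w) = k = \ell_\sXX(w)$, whence Lemma~\ref{lem_prepa}(4) produces $c \in \Div{\Delta}$ with $w = \KK_c$. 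A symmetric argument handles right $\XX$-factors, yielding simultaneously the $\XX$-balance of $\KK_\Delta$ and the equality $\lDiv{\sXX}{\KK_\Delta} = \{\KK_a \mid a \in \Div{\Delta}\}$. The main obstacle I anticipate is the case analysis in (1); the delicate point is the two non-divisibility arguments $\HH_z(z) \notin X_r(a')$ and $\HH_z(x) \notin X_r(a')$, both resolved by combining the ``no frozen subword'' characterisation of $\Div{\Delta}$ with the left-lcm identity $\HH_z(x)z = \HH_x(z)x$.
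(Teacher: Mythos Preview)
Your proof is correct and close to the paper's. The only organisational difference is in part~(1): the paper does not induct. It observes that Lemma~\ref{lem_prepa}(3) already gives the contrapositive of your ``otherwise'' case for \emph{every} $a\in M(X,S)$ (if $\KK_a(x)=-\HH_a(x)$ then $x$ right-divides $a$), so only the ``right-divides'' direction needs work. For that direction it writes $a=a_1x$ and argues exactly as you do that $\HH_x(x)$ cannot right-divide $a_1$ (frozen pair versus Proposition~\ref{prop_simple_lcm2}(2)), but then applies Lemma~\ref{lem_prepa}(3) to $a_1$ rather than an induction hypothesis. Your induction is perfectly valid; the ``otherwise'' branch simply re-derives, inside $\Div{\Delta}$, a special case of what Lemma~\ref{lem_prepa}(3) already supplies for all of $M(X,S)$. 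Parts~(2) and~(3) match the paper's argument essentially verbatim.
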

\begin{proof} By Lemma~\ref{lem_prepa}, we have$$\{x\in X\mid \KK_a(x)\neq \HH_a(x)\} \subseteq \{x\in X\mid x\textrm{ right divides }a\}$$ and $\ell_\sX(a) \geq \ell_{\sXX}(\KK_a)) \geq n(\KK_a)$. Since $\ell_\sX(a) = \#\{x\in X\mid x$ right divides a$\}$ and $n(\KK_a) = \#\{x\in X\mid \KK_a(x)\neq \HH_a(x)\}$, in order to prove (1), we only need to prove that if $x$ right-divides $a$, then $\KK_a(x) \neq \HH_a(x)$. Assume $x$ right-divides $a$ in $M(X,S)$. Write $a = a_1x$ with $a_1$ in $M(X,S)$. We have $\KK_a(x)=\KK_{a_1}(\KK_x(x)) = -\KK_{a_1}(\HH_x(x))$. Since $a_1x$ belongs to $\Div{\Delta}$ and $(\HH_x(x),x)$ is a frozen pair, the element~$\HH_x(x)$ cannot right-divide $a_1$, from Proposition~\ref{prop_simple_lcm2}(2). By Lemma~\ref{lem_prepa}(3) this implies that $\KK_{a_1}(\HH_x(x)) = \HH_{a_1}(\HH_x(x))$. Thus,  $\KK_a(x) = -\KK_{a_1}(\HH_x(x)) = -\HH_{a_1}(\HH_x(x)) = -\HH_{a}(x)$, and (1) holds.\\
(2) is a direct consequence of (1) since every element of $\Div{\Delta}$ is the left lcm of its set of right-divisors that belong to $X$.\\
(3) Let $a$ be in $\Div{\Delta}$. Then there exist $a',a''$ in $\Div{\Delta}$ so that $aa' = a''a = \Delta$. Therefore, $\KK_a\KK_{a'} = \KK_{a''}\KK_a = \KK_\Delta$. Since $\ell_\sX(a)+\ell_\sX(a') = \ell_\sX(a'')+\ell_\sX(a) = \ell_\sX(\Delta)$, it follows from (1), that $\KK_a$ is both a $\sXX$-left factor and a $\sXX$-right factor of $\KK_\Delta$. Now, let us prove that the $\sXX$-left factors, and the $\sXX$-right factors of $\KK_\Delta$ belong to~$\KK(\Div{\Delta})$. Assume $\rho,\rho'$ belong to $W(X,S)$ so that $\KK_\Delta = \rho\rho'$ with $\ell_\sXX(\KK_\Delta) = \ell_\sXX(\rho)+\ell_\sXX(\rho')$. Since $W(X,S) = \KK(M(X,S))$, there exist $a$ and $a'$ in $M(X,S)$ so that $\rho = \KK_a$ and $\rho' = \KK_{a'}$. Then, it follows from~(1) and Lemma~\ref{lem_prepa}(1)(2) that $n(\KK_\Delta)\leq n(\KK_a)+n(\KK_{a'}) \leq \ell_\sXX(\KK_a)+\ell_\sXX(\KK_{a'}) = \ell_\sXX(\KK_\Delta) = n(\KK_\Delta)$. This imposes $n(\KK_{a}) = \ell_\sXX(\KK_a)$ and $n(\KK_{a'}) = \ell_\sXX(\KK_{a'})$. By Lemma~\ref{lem_prepa}(4) we deduce there exist $b,b'$ in $\Div{\Delta}$ so that $\KK_a = \KK_b$ and $\KK_{a'} = \KK_{b'}$. This proves~(3).
 \end{proof}

\begin{prop}\label{prop_quot_gene}
 Assume $M$ is a Garside monoid with Garside element~$\Delta$, with Garside group~$G$ and with atom set~$X$. Assume the length function~$\lX$ on $M$ is additive and $W$ is a quotient of $G$ that is a finite group. Set~$\XX = \psi(X)$, where $\psi: G\to W$ is the canonical morphism. Assume (a) the restriction of $\psi$ to $\Div{\Delta}$ is into; (b) for every $g$ in $\Div{\Delta}$, $\lX(g) = \ell_{\sXX}(\psi(g))$; (c) $\psi(\Delta)$ is $\XX$-balanced in $W$ and $\lDiv{\sXX}{\psi(\Delta)} = \psi( \Div{\Delta})$. Then $M_{\psi(\Delta),\XX}$ is a Garside monoid that is isomorphic to~$M$; more precisely, the map $x\mapsto \KK(x)$ induces an isomorphism from $M$ onto $M_{\psi(\Delta),\XX}$. In other words, $W$ is a generating generated group for~$M$.
\end{prop}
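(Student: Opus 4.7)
The plan is to verify that $M_{\psi(\Delta),\XX}$ is a Garside monoid by applying Theorem~\ref{the_jean}, and then to construct an explicit isomorphism $M\to M_{\psi(\Delta),\XX}$ extending the assignment $x\mapsto\underline{\psi(x)}$. The common tool throughout is the length-preserving bijection $\psi|_{\Div{\Delta}}\colon\Div{\Delta}\to\lDiv{\sXX}{\psi(\Delta)}$ delivered by hypotheses~(a), (b), and~(c). The first key step is to upgrade this bijection to a poset isomorphism between $(\Div{\Delta},\text{left-}M\text{-divisibility})$ and $(\lDiv{\sXX}{\psi(\Delta)},\text{left-}\XX\text{-factor})$, and symmetrically on the right; this transports the known Garside lattice structure on $\Div{\Delta}$ over to $\lDiv{\sXX}{\psi(\Delta)}$, supplying the only missing hypothesis of Theorem~\ref{the_jean} (since (c) already provides the $\XX$-balancedness of $\psi(\Delta)$).

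The forward direction of the poset-isomorphism is immediate: if $b=ac$ in $M$ with $a,b\in\Div{\Delta}$, then $c\in\Div{\Delta}$ by closure of $\Div{\Delta}$ under factors, and by~(b) combined with the additivity of $\ell_X$ one obtains $\ell_{\sXX}(\psi(b))=\ell_{\sXX}(\psi(a))+\ell_{\sXX}(\psi(c))$, so $\psi(a)$ is a left-$\XX$-factor of $\psi(b)$. The converse is the substantive step: write $\psi(b)=\psi(a)u$ with $\XX$-length additivity, and combine with a factorization $\psi(\Delta)=\psi(b)w$ supplied by the $\XX$-balancedness in~(c); the element $u$ then appears as a right-$\XX$-factor of $\psi(\Delta)$, hence by~(c) equals $\psi(c)$ for a unique $c\in\Div{\Delta}$ via~(a). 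One must then upgrade the identity $\psi(ac)=\psi(b)$, together with $\ell_X(ac)=\ell_X(b)$, to the equality $ac=b$ in $M$. I would handle this by induction on $\ell_X(a)$: the base $a=1$ is immediate from~(a); for the inductive step, pick an atom $x$ with $a=x\tilde a$, use the closure of $\lDiv{\sXX}{\psi(\Delta)}$ under left-$\XX$-subfactors to locate a unique simple $\tilde b\in\Div{\Delta}$ with $\psi(\tilde b)=\psi(\tilde a c)$, apply the inductive hypothesis to get $\tilde a c=\tilde b$, and finally reduce to showing $x\tilde b=b$ in $M$, which I would force via the left-lcm of $x$ and $b$ in the Garside lattice of $M$ together with~(a). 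This atom-level step is where I expect the main obstacle to lie, since it amounts to proving that ``$\psi(x)$ a left-$\XX$-factor of $\psi(b)$'' forces $x$ to left-divide $b$ in $M$; it is precisely here that all three hypotheses~(a), (b), (c) must be invoked simultaneously to move an equality in $W$ back up to an equality in the Garside monoid $M$.

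Once the order-isomorphism is established, Theorem~\ref{the_jean} yields that $M_{\psi(\Delta),\XX}$ is a Garside monoid whose Garside family $\{\underline v\mid v\in\lDiv{\sXX}{\psi(\Delta)}\}$ is in length-preserving bijection with $\Div{\Delta}$. To produce the isomorphism with $M$, I would use the ``simple element'' presentation of a Garside monoid, namely $M\cong\langle\Div{\Delta}\mid\underline s\,\underline t=\underline{st}\text{ when }s,t,st\in\Div{\Delta}\rangle$. The assignment $\underline s\mapsto\underline{\psi(s)}$ sends each defining relation of $M$ to a valid defining relation of $M_{\psi(\Delta),\XX}$ thanks to~(b) (which makes the $\XX$-lengths add up) and the fact that $\psi$ is a group morphism, thereby defining a monoid morphism $\phi\colon M\to M_{\psi(\Delta),\XX}$. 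The reverse assignment $\underline v\mapsto\psi|_{\Div{\Delta}}^{-1}(v)$ is well-defined on the defining relations of $M_{\psi(\Delta),\XX}$ precisely because of the converse direction of the order-isomorphism established in the previous paragraph. Being mutually inverse on the generating families, these morphisms are mutually inverse on the whole monoids, and the resulting isomorphism shows that $(W,\XX)$ is a generating generated group for $M$.
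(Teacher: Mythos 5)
You have put your finger on exactly the right pressure point, but your proposal does not close it, and in fact it cannot be closed with the tools you allow yourself. The step you defer --- that ``$\psi(x)$ a left-$\XX$-factor of $\psi(b)$'' forces $x$ to left-divide $b$ in $M$, or equivalently that an equality $\psi(s)\psi(t)=\psi(r)$ with $s,t,r\in\Div{\Delta}$ and $\ell_\sX(s)+\ell_\sX(t)=\ell_\sX(r)$ lifts to $st=r$ in $M$ --- does \emph{not} follow from hypotheses (a), (b), (c). Take $M=\mathbb{N}^3$, the free abelian monoid on $x,y,z$, with $\Delta=xyz$, and let $W=\mathbb{Z}^3/L$ where $L$ is generated by $(2,-1,-1)$, $(0,3,0)$, $(0,0,3)$; then $W$ has order $18$, and a direct check shows that $\psi$ is injective and length-preserving on the eight simple elements and that $\lDiv{\sXX}{\psi(\Delta)}=\psi(\Div{\Delta})$ is $\XX$-balanced, so (a), (b), (c) all hold. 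Yet $\psi(x)^2=\psi(yz)$ in $W$ with $1+1=2$, so $M_{\psi(\Delta),\XX}$ carries the defining relation $\underline{\psi(x)}^{\,2}=\underline{\psi(y)}\,\underline{\psi(z)}$ while $x^2\neq yz$ in $M$: the map $x\mapsto\underline{\psi(x)}$ is a proper quotient, not an isomorphism, and $\psi(x)$ is a left-$\XX$-factor of $\psi(yz)$ although $x$ does not divide $yz$. So no induction on $\ell_\sX$ and no appeal to left-lcm's in $M$ will produce your ``atom-level step''; what is missing is a genuinely additional hypothesis, namely that the partial multiplication which $W$ induces on $\psi(\Div{\Delta})$ (length-additive products landing back in $\lDiv{\sXX}{\psi(\Delta)}$) is no larger than the one inherited from $M$.

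In fairness, the paper's own proof makes the same silent jump: the sentence ``Therefore, the monoid $M_0$ is isomorphic to the monoid defined by the monoid presentation \dots'' replaces the relation condition ``$\xx\zz=\yy$ in $W$'' by ``$xz=y$ in $M$'' under the bijection $\psi^{-1}$, and that replacement is exactly the implication that fails above. In the intended application the lifting statement is true, but its proof uses the finer information of Lemma~\ref{lem_prepa}(3)--(4) and Proposition~\ref{prop_clef}(1) rather than (a)--(c): if $\KK_{st}=\KK_r$ with $r$ simple and $\ell_\sX(st)=\ell_\sX(r)$, then $n(\KK_{st})=\ell_\sX(st)$, so $st$ admits $\ell_\sX(st)$ distinct right divisors in $X$, hence equals their left lcm, is itself in $\Div{\Delta}$, and equals $r$ by injectivity. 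A correct write-up should either add this lifting property as an explicit hypothesis (d) and verify it for $W(X,S)$ by the argument just sketched, or abandon the general statement and argue directly for $\KK$. Once that single statement is available, the rest of your architecture (transporting the poset, invoking Theorem~\ref{the_jean}, and matching the two simple-element presentations) goes through; note only that the detour through Theorem~\ref{the_jean} is not needed, since $M_0\cong M$ already makes $M_0$ a Garside monoid.
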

\begin{proof} We first remark that $\XX$ generates $W$ since $X$ generates $G$ and~$\XX = \psi(X)$. Set $w_0 = \psi(\Delta)$ and $M_0 = M_{w_0,\XX}$. By definition, $M_0$ has a monoid presentation $$\left\langle \underline{\xx}, \xx\in \lDiv{\sXX}{w_0} \mid \underline{\xx}\,\underline{\zz} = \underline{\yy}; \xx,\yy,\zz\in W \textrm{ with }\left\{\begin{array}{l} \xx\zz = \yy;\\ \ell_\sXX(\xx)+\ell_\sXX(\zz) = \ell_\sXX(\yy)\end{array}\right. \right\rangle.$$ But $\xx\mapsto \underline{\xx}$ is one-to-one, and $\psi$ is a morphim that induces a one-to-one map from $\Div{\Delta}$ to $\lDiv{\sXX}{w_0}$ such that $\ell_\sX(g) = \ell_\sXX(\phi(g))$ for $g$ in $\Div{\Delta}$. Therefore, the monoid~$M_0$ is isomorphic to the monoid defined by the monoid presentation $$\left\langle \underline{x}, x\in \Div{\Delta} \mid \underline{x}\, \underline{z} = \underline{y}; x,y,z\in \Div{\Delta} \textrm{ with }\left\{\begin{array}{l} xz = y;\\ \ell_\sX(x)+\ell_\sX(z) = \ell_\sX(y)\end{array}\right. \right\rangle.$$ The length is additive in $M$, so it turns out that the latter monoid has presentation $$\left\langle \underline{x}, x\in \Div{\Delta} \mid \underline{x}\, \underline{z} = \underline{y}; x,y,z\in \Div{\Delta} \textrm{ with } xz = y \right\rangle.$$ But this presentation is a presentation for $M$ by~\cite{ddkgm,bessis2}. Hence, $M_0$ is isomorphic to $M$, and thereby $(W,\XX)$ is a generating group for $M$.
\end{proof}
Gathering Propositions~\ref{prop_clef} and~\ref{prop_quot_gene} we get
\begin{cor}\label{cor_basedon} The monoid~$W(X,S)$ is a generating generated group for $M(X,S)$.
\end{cor}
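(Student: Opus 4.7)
The plan is to deduce the corollary immediately by feeding Proposition~\ref{prop_clef} into Proposition~\ref{prop_quot_gene}; the three hypotheses of the latter are exactly the three parts of the former, so no new argument is required at this step.

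First I would lay out the standing data and check the background assumptions of Proposition~\ref{prop_quot_gene}. By Theorem~\ref{thm_garsidetableau_structuregp2}(2), $M(X,S)$ is a Garside monoid with atom set~$X$, Garside element~$\Delta$, and Garside group~$G(X,S)$. The length function~$\ell_\sX$ on $M(X,S)$ is additive because the defining relations in~(\ref{equation:structuregroup1}) are homogeneous of degree two. By the paragraph introducing $W(X,S)$ (just before Lemma~\ref{lem_prepa}), $\KK : G(X,S)\to W(X,S)\subseteq GL(V)$ is a surjective group morphism onto a finite group, and $\XX = \KK(X)$ generates $W(X,S)$ both as a group and as a monoid. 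This places us squarely in the hypotheses of Proposition~\ref{prop_quot_gene}, with $\psi = \KK$.

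Next I would verify the three specific conditions. Condition~(a), that the restriction of $\KK$ to $\Div{\Delta}$ is injective, is Proposition~\ref{prop_clef}(2). Condition~(b), that $\ell_\sX(g) = \ell_\sXX(\KK_g)$ for every $g\in\Div{\Delta}$, is part of the equality chain in Proposition~\ref{prop_clef}(1). Condition~(c), that $\KK_\Delta$ is $\XX$-balanced in~$W(X,S)$ and that $\lDiv{\sXX}{\KK_\Delta} = \KK(\Div{\Delta})$, is exactly Proposition~\ref{prop_clef}(3). Applying Proposition~\ref{prop_quot_gene} then gives the isomorphism $M(X,S)\simeq M_{\KK_\Delta,\XX}$ induced by $x\mapsto \underline{\KK_x}$, which is precisely the assertion that $W(X,S)$ is a generating generated group for $M(X,S)$.

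I do not expect any further obstacle at this stage: all the genuine combinatorial work — in particular the key insight that the sign-twisted morphism~$\KK$, unlike the plain permutation morphism~$\HH$, faithfully detects right divisibility on simple elements — has already been packaged inside Proposition~\ref{prop_clef}, which itself rests on the frozen-pair analysis of Proposition~\ref{prop_simple_lcm2}. Thus the corollary is a genuine one-line gluing, and the proof I would write is simply the verification above that (a)--(c) of Proposition~\ref{prop_quot_gene} match (2), (1), (3) of Proposition~\ref{prop_clef}, followed by a citation of Proposition~\ref{prop_quot_gene}.
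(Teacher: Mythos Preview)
Your proposal is correct and matches the paper's own proof exactly: the paper simply states that the corollary follows by ``gathering Propositions~\ref{prop_clef} and~\ref{prop_quot_gene}'', which is precisely the verification you spell out. Your explicit matching of conditions~(a), (b), (c) of Proposition~\ref{prop_quot_gene} with parts~(2), (1), (3) of Proposition~\ref{prop_clef} is the intended one-line gluing.
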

%%%%%%%%%%%%%%%%%%%%%%%%%%%%%%%%%%%%%%%%%%%%%%%%%%%%%%%%
\section{The case of generating generated sections}
%%%%%%%%%%%%%%%%%%%%%%%%%%%%%%%%%%%%%%%%%%%%%%%%%%%%%%%%
%%%%%%%%%%%%%%%%%%%%%%%%%%%%%%%%%%%%%%%%%%%%%%%%%%%%%%%%
\label{sec_ftquot}
In the previous section, we proved that every Garside monoid~$M(X,S)$ of $I$-type admits a finite generating generated group~$W(X,S)$.  In the case of spherical type Artin-Tits monoids, the associated generating generated groups are the Coxeter groups. Moreover, the latter are also generating generated sections. In this section, we investigate the properties of the groups $W(X,S)$. In particular, we are interested to know which properties they share with Coxeter groups. We show that the groups $W(X,S)$ are not necessarily generating generated sections of~$M(X,S)$. Yet, we can characterize whenever it occurs, by a condition that can be tested on the presentation of~$M(X,S)$.
Here are the questions we focus on in the remaining of the section:
 \begin{ques} \label{3questions} Let $(X,S)$ be a non-degenerate symmetric set-theoretical solution of the QYBE. Denote by $\Delta$ the Garside element of $M(X,S)$, and by $\KK: M(X,S)\to W(X,S), a\mapsto\KK_a$ the surjective morphism defined in the previous section.
\begin{enumerate}
\item[(a)] Is there a simple necessary and sufficient condition that ensures that $W(X,S)$ is a generating generated section for~$M(X,S)$ ?
\item[(b)] Considering the exact sequence \begin{equation}1\to N(X,S)\to G(X,S)\stackrel{\psi}{\to} W(X,S)\to 1\label{suiteexacte}\end{equation} What is the structure of the group $N(X,S)$, that is $\textrm{Ker}(\psi)$?
\item[(c)] What is the cardinality of $W(X,S)$ ?
\item[(d)] Can we find a presentation of $W(X,S)$ with $X$ as a generating set?
\end{enumerate}
\end{ques}

As in the previous section, we fix a non-degenerate symmetric set-theoretical solution~$(X,S)$ of the QYBE, where $X$ is a finite set of cardinality $n$. We still denote by $\Delta$ the Garside element of $M(X,S)$, and by $\KK:a\mapsto \KK_a$ the surjective morphism from $M(X,S)$ to $W(X,S)$.
%%%%%%%%%%%%%%%%%%%%%%%%%%%%%%%%%%%%%%%%%%%%%%%%%%%%%%%%
\subsection{A necessary and sufficient condition for $W(X,S)$ to be a generating generated section for $M(X,S)$}
%%%%%%%%%%%%%%%%%%%%%%%%%%%%%%%%%%%%%%%%%%%%%%%%%%%%%%%%
Here we introduce a property, namely Property~$\CC$, and prove that $M(X,S)$ satisfies this property if and only if $W(X,S)$ is a generating generated section for $M(X,S)$. Recall that the maps~$f_x$ and~$g_x$ have been defined in Section~\ref{secQYBE}.
\begin{defn} (1) We say Property~$\CC$ holds for a pair~$(x,y)$ of elements in~$X$ if $g_x\circ g_y= Id_X$ and $f_y\circ f_x= Id_X$.\\
(2) We say that~$(X,S)$ verifies Property~$\CC$ if say Property~$\CC$ holds for each frozen pair.
\end{defn}
In Example \ref{exemple:exesolu_et_gars}, the solution $(X,S)$ verifies Property~$\CC$. Indeed, the frozen words are $x_{1}^{2}$, $x^2_{2}$, $x_{3}x_{4}$, $x_{4}x_{3}$ and Property~$\CC$ holds for each of them.
\begin{rem}The Property~$\CC$ is not verified by all non-degenerate symmetric set-theoretical solutions as shown by Example~\ref{ex:pasCC}. However, this is a property that is satisfied by various solutions (see~\cite[Ex.~1.12]{gateva}, \cite[Ex.~2.3]{chou_godel} or the example after Prop.~4.2 of~\cite{jespers} for instance.)
\end{rem}

\begin{prop} \label{lem_cns} The monoid~$W(X,S)$ is a generating generated section for~$M(X,S)$ if and only if Property~$\CC$ is verified. Moreover, in this case, for every frozen pair~$(x,y)$, the element~$xy$ belongs to $\textrm{Ker}(\KK)$. 
\end{prop}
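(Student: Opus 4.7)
The plan is to analyse the linear map $\KK_{xy}$ attached to a frozen word and use it as a bridge between the generating generated section property and Property~$\CC$.

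\textbf{Step 1 (the key sign cancellation).} For a frozen pair $(x,y)$, the identity $f_y(x)=y$ forces $\HH_y(y)=f_y^{-1}(y)=x$, so $\KK_y(y)=-\HH_y(y)=-x$. Applying $\KK_x$ gives $\KK_x(-x)=-\KK_x(x)=-\bigl(-\HH_x(x)\bigr)=+\HH_x(x)$: the two sign flips prescribed by the definition of $\KK$ cancel. For $z\in X\setminus\{y\}$ we have $\HH_y(z)\neq x$ (otherwise $f_y^{-1}(z)=x$ would force $z=y$), hence $\KK_y(z)=+\HH_y(z)$ and $\KK_x$ introduces no further sign. Therefore $\KK_{xy}(z)=+\HH_{xy}(z)$ for every $z\in X$; in other words, $\KK_{xy}$ acts on $X$ as the permutation $\HH_{xy}=(f_y\circ f_x)^{-1}$, with no negative signs. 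In particular $\KK_{xy}=\mathrm{Id}$ if and only if $f_y\circ f_x=\mathrm{Id}_X$.

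\textbf{Step 2 (the ``if'' direction, including the moreover).} Assume Property~$\CC$. Its $f$-half and Step~1 give $\KK_{xy}=\mathrm{Id}$, i.e.\ $xy\in\textrm{Ker}(\KK)$, for every frozen pair $(x,y)$; this is the moreover statement. To show $W(X,S)$ is a generating generated section, I argue by induction on $\ell(a)$ that $\KK_a\in\KK(\Div{\Delta})$ for every $a\in M(X,S)$. The case $a\in\Div{\Delta}$ is trivial; otherwise Proposition~\ref{prop_simple_lcm2}(2) supplies a word representative of $a$ containing a frozen word $xy$, so $a=u\cdot xy\cdot v$ in $M(X,S)$ and $\KK_a=\KK_u\KK_{xy}\KK_v=\KK_{uv}$ with $\ell(uv)=\ell(a)-2$, allowing the induction to apply. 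Combined with Proposition~\ref{prop_clef}(3), this yields $W(X,S)=\KK(M(X,S))=\KK(\Div{\Delta})=\lDiv{\sXX}{\KK_\Delta}$, so $W(X,S)$ is a generating generated section.

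\textbf{Step 3 (the ``only if'' direction).} Conversely, assume $W(X,S)$ is a generating generated section. For a frozen pair $(x,y)$ there is $b\in\Div{\Delta}$ with $\KK_b=\KK_{xy}$. By Step~1, $\KK_{xy}$ carries $X$ into $X$ with no sign flips; by Proposition~\ref{prop_clef}(1), $\KK_b$ has sign flips at exactly the $\ell(b)$ right-divisors of $b$ lying in $X$. These two facts force $\ell(b)=0$, hence $b=1$ and $\KK_{xy}=\mathrm{Id}$, yielding $f_y\circ f_x=\mathrm{Id}_X$ (and, incidentally, the moreover). For the remaining identity $g_x\circ g_y=\mathrm{Id}_X$ of Property~$\CC$, I would invoke the analogous representation of the opposite Garside monoid $M(X,S)^{\mathrm{op}}$, which is itself a monoid of $I$-type associated with the set-theoretical solution $(X,S')$ in which the roles of $g$ and $f$ are interchanged. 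Because $M(X,S)$ and $M(X,S)^{\mathrm{op}}$ share the same Garside element and set of simple elements (only left/right divisibility are swapped), the generating generated section property transfers to the opposite, and running the same sign-cancellation argument there gives the ``$f'\circ f'=\mathrm{Id}_X$'' condition, which in the original notation reads exactly $g_x\circ g_y=\mathrm{Id}_X$.

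\textbf{Main obstacle.} Step~1 is the technical heart: the two prescribed sign flips in $\KK$ conspire to cancel precisely at a frozen position, so that $\KK_{xy}$ becomes sign-free exactly for frozen pairs (as opposed to non-frozen pairs, where only one sign flip occurs and a negative sign survives). Once Step~1 is available, the ``if'' direction is a routine length-reduction induction and the ``only if'' direction is a sign-count. The delicate point I foresee is extracting both halves of Property~$\CC$ from the single identity $\KK_{xy}=\mathrm{Id}$ in the ``only if'' direction; my proposed route through the opposite Garside monoid relies on the left–right symmetry of the Garside and $I$-type structures, and making this symmetric transfer precise is the main remaining technical burden.
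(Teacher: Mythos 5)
Your Step~1, Step~2, the ``moreover'' clause, and the first half of Step~3 (deriving $f_y\circ f_x=Id_X$ from the section property) are correct and coincide with the paper's own argument: the paper likewise computes that $\KK_{xy}$ is sign-free on $X$ for a frozen pair $(x,y)$, concludes $n(\KK_{xy})=0$, invokes Proposition~\ref{prop_clef}(1) to force the simple preimage to be $1$, and proves the converse by the same frozen-word length-reduction induction.

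The gap is the second half of Property~$\CC$, namely $g_x\circ g_y=Id_X$. Your transfer to the opposite monoid is asserted rather than proved, and the assertion is essentially as strong as what remains to be shown. The representation $\KK$ is built asymmetrically from the maps $f_x^{-1}$ alone, so the group attached to the opposite solution (where the roles of $f$ and $g$ are exchanged) is a quotient of $G(X,S)$ by an a priori different kernel; the fact that $M(X,S)$ and $M(X,S)^{\mathrm{op}}$ share the same set of simple elements only gives that the new representation is injective on the $2^n$ simples, i.e.\ that the opposite quotient has order \emph{at least} $2^n$ --- not the equality that characterizes the section property. In fact the real content of the proposition is that the $f$-condition on all frozen pairs (equivalent, by your Step~1, to the section property) already \emph{forces} the $g$-condition, so the left--right symmetry you invoke is a consequence of the proposition, not an available input. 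The paper closes this gap by a direct computation: for a frozen word $xy$ and an arbitrary $z\in X$ it rewrites $xyz=t\,x'y'$ with $t=g_x\circ g_y(z)$ and $z=f_{y'}\circ f_{x'}(t)$, shows via Proposition~\ref{prop_simple_lcm1} (the non-simple length-$3$ element $xyz$ has at most two right divisors in $X$) that $x'y'$ is again a frozen word, and then applies the already-established $f$-condition to the frozen pair $(x',y')$ to get $t=z$, hence $g_x\circ g_y=Id_X$. Some argument of this kind --- or an honest proof of your opposite-monoid transfer --- is needed to complete Step~3.
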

\begin{proof}
Assume~$W(X,S)$ is a generating generated section for~$M(X,S)$. Then $\KK(\Div{\Delta}) = W(X,S)$. Let~$xy$ be any frozen word. By assumption, there exists $a$ in $\Div{\Delta}$ so that $\KK_{xy} = \KK_a$. Let $z$ belong to~$X$. If $z = y$, then  $\KK_{xy}(y) = \KK_{x}(-x) = \phi_x(x)\in X$. Otherwise, $\KK_{xy}(z) = \KK_{x}(\HH_y(z))$. Since $z\neq y$ and $\HH_y(y) = x$, we have $\HH_y(z))\neq x$. Therefore, $\KK_{x}(\HH_y(z)) = \HH_x(\HH_y(z)) = \HH_{xy}(z)\in X$. Hence, $\KK_{xy}(X) \subseteq X$. We conclude that $\KK_{xy}(X) =  X$ and, thereby, that $n(\psi_{xy})=0$. By Proposition~\ref{prop_clef}(1), this imposes $a = 1$ and~$xy$ belongs to $\textrm{Ker}(\KK)$. It follows that $\KK_x = \KK_y^{-1}$ and $f^{-1}_x\circ f^{-1}_y= Id_X$, by definition of morphisms~$\KK$ and $\HH$. Hence, $f_y\circ f_x= Id_X$.  Now let $z$ belong to $X$. Then, we have $xyz = xz'y'$  where $y' = f_z(y)$ and $z' = g_y(z)$. We have also $xz'y' = tx'y'$ with $x' = g_{z'}(x)$ and $t = g_x(z')$. In particular we have $xyz\,=\,tx'y'$, $t = g_x\circ g_y(z)$ and  $z = f_{y'}\circ f_{x'}(t)$. Now, we claim that $x'y'$ is a frozen word. We consider two cases, depending whether $yz$ is a frozen word or not. Assume, first,  $yz$ is a frozen word. In this case, the element $xyz$ as a unique word representative, that is $xyz$. Hence, there is nothing to prove: $x=t$, $y = x'$ and $z = y'$. So $x'y'$ is a frozen word. Assume, secondly, that $yz$ is not a frozen word. This imposes $y'\neq z$ and $z'\neq y$. Since $xy$ is a frozen word, it follows that $xz'$ is not. Since $x' = g_{z'}(x)$, we get  $x'\neq z'$. Set $x'' = g_{x'}(y')$ and  $y'' = f_{y'}(x')$. We have $x'y' = x''y''$. Since $xy$ is a frozen word the element~$xyz$ does not belong to~$\Div{\Delta}$ by Proposition~\ref{prop_simple_lcm1}. Since~$xyz$ is not in~$\Div{\Delta}$ and the length of~$xyz$ is $3$, by Proposition~\ref{prop_simple_lcm1}, the set of its right-divisors that belongs to~$X$ is of cardinality at most~$2$. Since~$xyz = xz'y'=tx'y'=tx''y''$ and $y'\neq z$, we must have either $y'' = y'$ or $y'' = z$. But $y'' = f_{y'}(x')$ and $z = f_{y'}(z')$. Since $x'\neq z'$, it follows that $z\neq Z''$. Thus, $y' = y''$. Since $y'' = f_{y'}(x')$, this means that $x'y'$ is a frozen word. This proves our claim. As a consequence $f_{y'}\circ f_{x'} = Id_X$, $z = t$ and $g_x\circ g_y(z) = z$. Hence, $g_x\circ g_y = Id_X$.

Conversely, assume Property~$\CC$ is verified by $(X,S)$. Let $\rho$ be in $W(X,S)$. There is an element~$a$ in $M(X,S)$ so that $\rho = \KK_a$ (see remark before Lemma~\ref{lem_prepa}). Assume we can write $a = a_1xya_2$ with $x,y$ in $X$, $a_1,a_2$ in~$M(X,S)$ so that $xy$ is a frozen word. Since $f_y\circ f_x = Id_X$, we have $\KK_{xy} = Id_X$ and, therefore, $\rho =\KK_a =  \KK_{a_1a_2}$ with $\ell_\sX(a_1a_2)<\ell_\sX(a)$. Hence, if $a$ is of minimal length among the elements of $M(X,S)$ whose image by $\KK$ is $\rho$, then none of its representative words  contains a frozen word as a subword. By Proposition~\ref{prop_simple_lcm2}, this means that $a$ belongs to~$\Div{\Delta}$. Thus $\rho$ belongs to $\KK(\Div{\Delta})$, and $W(X,S) = \KK(M(X,S)) = \KK(\Div{\Delta})$. Hence,~$W(X,S)$ is a generating generated section for~$M(X,S)$.
\end{proof}
%%%%%%%%%%%%%%%%%%%%%%%%%%%%%%%%%%%%%%%%%%%%%%%%%%%%%%%%
\subsection{The frozen subgroup of~$M(X,S)$}
%%%%%%%%%%%%%%%%%%%%%%%%%%%%%%%%%%%%%%%%%%%%%%%%%%%%%%%%
Our purpose is to prove that, when Property~$\CC$ is verified, the subgroup of $G(X,S)$ generated by the frozen elements is a normal subgroup and a free Abelian group, freely generated by the frozen elements. This subgroup will turn out to be, under Property~$\CC$, the subgroup~$N(X,S)$ in the exact sequence~(\ref{suiteexacte}). Before proceeding, we need to prove some properties satisfied by frozen words.

\begin{lem}\label{lem_x_ix_j_trivial}
Assume $(X,S)$ verifies Property~$\CC$ and $xy$ is a frozen word then\\
(1) the word $yx$ is frozen.\\
(2) Let $z$ be in $X$. There exists a unique pair $(x',y')$ so that $xyz = zx'y'$ and~$x'y'$ is a frozen word.
\end{lem}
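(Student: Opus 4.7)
For part (1), I will compute directly. Since $xy$ is frozen, $g_x(y)=x$ and $f_y(x)=y$; Property~$\CC$ yields $g_x\circ g_y=Id_X$ and $f_y\circ f_x=Id_X$, so $g_y=g_x^{-1}$ and $f_x=f_y^{-1}$. Evaluating, $g_y(x)=g_x^{-1}(x)=y$ and $f_x(y)=f_y^{-1}(y)=x$, so $S(y,x)=(y,x)$ and $yx$ is frozen.

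For part (2), the decomposition $xyz=zx'y'$ is immediate from two applications of the defining relations of $M(X,S)$. Setting $z_1=g_y(z)$, $y'=f_z(y)$, $x'=f_{z_1}(x)$ and $t=g_x(z_1)$, we have $xyz=xz_1y'=tx'y'$ in $M(X,S)$, and Property~$\CC$ gives $t=(g_x\circ g_y)(z)=z$, whence $xyz=zx'y'$. To prove that $x'y'$ is a frozen word, I will use that $xyz$ contains the frozen subword $xy$, so by Proposition~\ref{prop_simple_lcm2}(2) it does not belong to $\Div{\Delta}$; by Proposition~\ref{prop_simple_lcm1}, its set of right divisors in $X$ has cardinality at most~$2$. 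Now the three expressions $xyz$, $zx'y'$ and $zx''y''$ (with $x''=g_{x'}(y')$ and $y''=f_{y'}(x')$) exhibit $z$, $y'$ and $y''$ as right divisors in $X$, so two of them must coincide. The desired outcome $y'=y''$ is precisely the statement that $x'y'$ is frozen; the remaining cases $z=y'$ and $z=y''$ will each be shown to force $z=x$ --- the first case follows because $z=y'=f_z(y)$ means $yz$ is frozen, and then the uniqueness, combined with Part~(1), of the frozen partner of $y$ yields $z=x$; the second case yields, by left-cancelling $z$ in $xyz=zx''z$, the equality $xy=zx''$ in $M(X,S)$, and since the frozen element $xy$ has a unique word representative, this forces $z=x$. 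In this degenerate situation a direct computation under Property~$\CC$ gives $z_1=g_y(x)=y$, $x'=f_y(x)=y$ and $y'=f_x(y)=x$, so $x'y'=yx$, which is frozen by Part~(1).

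Uniqueness of the pair $(x',y')$ will then follow from left-cancelling $z$ in $zx'y'=zx''y''$ combined with the fact that distinct frozen words represent distinct elements of $M(X,S)$. I expect the main obstacle to be the case analysis on the right divisors of $xyz$: it requires a careful balance between the lattice-type information of Proposition~\ref{prop_simple_lcm1} and the rigidity of frozen elements; all other steps are mechanical applications of Property~$\CC$.
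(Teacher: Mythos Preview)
Your argument is correct and uses the same core idea as the paper, namely the right-divisor count from Proposition~\ref{prop_simple_lcm1} applied to the length-$3$ element $xyz\notin\Div{\Delta}$. The organisation differs slightly. The paper does not redo the argument that $x'y'$ is frozen; it simply cites the claim already established inside the proof of Proposition~\ref{lem_cns} (where the cases are split according to whether $yz$ is frozen, and the possibility $y''=z$ is excluded directly via the bijectivity of $f_{y'}$). You instead first use Property~$\CC$ to obtain $t=z$ and then run a three-way case split on which two of $z,y',y''$ coincide, showing that the two ``bad'' cases force $z=x$, which you handle by a direct computation. Both routes are short and rely on the same ingredients; your version is self-contained, while the paper's is more economical because the work was already done earlier.

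Two minor points. In the case $z=y''$ you write ``left-cancelling $z$ in $xyz=zx''z$''; this should be \emph{right}-cancelling (your conclusion $xy=zx''$ is the right one). Also, your use of $(x'',y'')$ for $S(x',y')$ in the case analysis and again for a hypothetical second frozen pair in the uniqueness argument is a small notational clash; it causes no mathematical problem but would be worth disambiguating in a write-up.
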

\begin{proof} (1) Assume $xy$ is a frozen word. Applying Property~$\CC$, we get $g_x\circ g_y(x) = x$, that is $g_x(g_y(x)) = x$. Therefore, $(x,g_y(x))$ is a frozen pair. Since $(x,y)$ and $(x, g_y(x))$ are frozen pairs, it follows that $y = g_y(x)$. This imposes that $yx$ is a frozen word.\\
(2) The unicity is clear by the cancellativity property. If $z = x$, then $yx$ is a frozen word by~(1). So, assume~$z\neq x$. Consider $x',y',z'$ and $t$ like in the proof of Proposition~\ref{lem_cns}. We have $xyz = tx'y'$, where $x'y'$ is a frozen word and $t = g_x\circ g_y(z)$. Since Property~$\CC$ holds for the frozen pair $(x,y)$, one has~$g_x\circ g_y=Id$, so $z=t$ and $xyz = zx'y'$, with~$x'y'$ frozen.
\end{proof}
In the sequel, it will be convenient to introduce a notation for the frozen words. So we denote by $\fr{1},\cdots, \fr{n}$ the $n$ distinct frozen words. We denote in the same way the associated frozen elements of $M(X,S)$.
\begin{lem}\label{lem_importance_conditions_fg_2}
Assume $(X,S)$ verifies Property~$\CC$.\\(1) For any $i,j$ distinct in~$\{1,\ldots,n\}$, the elements $\fr{i}\fr{j}$ and  $\fr{j}\fr{i}$ are equal in $M(X,S)$ and are both the right lcm and the left lcm of $\fr{i}$ and $\fr{j}$.\\(2) Let~$a = \fr{1}^{m_1}\cdots \fr{n}^{m_n}$ be in $M(X,S)$ such that $m_1,\cdots, m_n$ are not negative integers. Assume that  $\fr{i}$ left divides $a$, where $i \in~\{1,\cdots, n\}$. Then, $m_i\geq 1$.
\end{lem}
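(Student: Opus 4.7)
The plan is to establish the commutativity $\fr{i}\fr{j} = \fr{j}\fr{i}$ first, then use it to derive the lcm statements in (1), and finally combine (1) with an induction for (2). Write $\fr{i} = x_iy_i$ and $\fr{j} = x_jy_j$. Two successive applications of Lemma~\ref{lem_x_ix_j_trivial}(2) give $x_iy_ix_j = x_j\,x'_1y'_1$ with $x'_1y'_1$ frozen, then $x'_1y'_1\,y_j = y_j\,x'_2y'_2$ with $x'_2y'_2$ frozen; combining, $\fr{i}\fr{j} = \fr{j}\cdot x'_2y'_2$. From the explicit formulas $y'_1 = f_{x_j}(y_i)$ and $y'_2 = f_{y_j}(y'_1)$ recorded in the proof of Proposition~\ref{lem_cns}, Property~$\CC$ applied to the frozen pair $(x_j,y_j)$ gives $f_{y_j}\circ f_{x_j} = \mathrm{Id}$, hence $y'_2 = y_i$. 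Since each element of $X$ is the second letter of a unique frozen pair (recalled after Definition~\ref{def_struct_gp}), we get $x'_2y'_2 = \fr{i}$, and therefore $\fr{i}\fr{j} = \fr{j}\fr{i}$.

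For the lcm claims, $\fr{i}$ and $\fr{j}$ both left-divide $\fr{i}\fr{j}$, so the right lcm $m$ left-divides $\fr{i}\fr{j}$ and satisfies $\ell(m)\le 4$. The case $\ell(m)=2$ is excluded because $\fr{i}\neq\fr{j}$. If $\ell(m)=3$, then $m = \fr{i}\,u = \fr{j}\,v$ for atoms $u,v$; applying~$\KK$ and using $\fr{i},\fr{j}\in\ker(\KK)$ (Proposition~\ref{lem_cns}) yields $\KK_u = \KK_v$. A short check shows~$\KK$ is injective on $X$: for distinct $x,y\in X$ one has $\KK_x(x) = -\HH_x(x)\notin X$ whereas $\KK_y(x) = \HH_y(x)\in X$, so $u = v$, whence $\fr{i}=\fr{j}$ by right cancellation, a contradiction. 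Hence $\ell(m)=4$ and $m = \fr{i}\fr{j}$. The left lcm statement is handled by the symmetric argument, since $\fr{i}$ and $\fr{j}$ both right-divide $\fr{i}\fr{j}=\fr{j}\fr{i}$ and the same length exclusion applies.

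For part~(2) I argue the contrapositive by induction on $N = \sum_k m_k$: if $m_i = 0$ then $\fr{i}$ does not left-divide $a$. The base case $N = 0$ is trivial as $a = 1$. For $N \geq 1$, fix $j \neq i$ with $m_j \geq 1$; by the commutativity from (1), write $a = \fr{j}\cdot a'$ with $a' = \fr{j}^{m_j-1}\prod_{k\neq i,j}\fr{k}^{m_k}$, which still has $\fr{i}$-exponent zero and total exponent $N-1$. If $\fr{i}$ left-divided $a$, then by~(1) the right lcm $\fr{j}\fr{i}=\fr{i}\fr{j}$ would left-divide $a$; writing $a = \fr{j}\fr{i}c$ and left-cancelling $\fr{j}$ gives $a' = \fr{i}c$, contradicting the induction hypothesis. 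The main obstacle is the commutativity in part~(1): identifying the frozen word produced by the double rewriting with $\fr{i}$ itself hinges on Property~$\CC$ collapsing the composition $f_{y_j}\circ f_{x_j}$ appearing in the formula for $y'_2$, and without this it is unclear which frozen word emerges.
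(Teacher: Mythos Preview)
Your proof is correct; part~(2) is essentially the same induction as in the paper (just phrased contrapositively), but part~(1) follows a genuinely different route in both halves.

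For the commutativity, the paper argues by a direct manipulation of the defining relations: writing $\fr{i}=x_1y_1$, $\fr{j}=x_2y_2$, it introduces the auxiliary relations $x_1z_1=x_2z_2$ and $t_1y_1=t_2y_2$, uses Property~$\CC$ to deduce $y_1x_2=z_1t_2$ and $y_2x_1=z_2t_1$, and then reads off $x_1y_1x_2y_2 = x_1z_1t_2y_2 = x_2z_2t_1y_1 = x_2y_2x_1y_1$. Your argument is more structural: you slide the frozen word $x_iy_i$ past $x_j$ and then past $y_j$ via two applications of Lemma~\ref{lem_x_ix_j_trivial}(2), and identify the resulting frozen word by tracking its second letter through $f_{y_j}\circ f_{x_j}=\mathrm{Id}$. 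This avoids introducing the auxiliary elements $z_i,t_i$ and leans on work already done.

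For the lcm claim the contrast is sharper. The paper excludes the length-$3$ possibility by enumerating all representative words of $x_1y_1x_2y_2$ and checking that no proper prefix beginning with the frozen word $x_1y_1$ is also a right multiple of $x_2y_2$; this is elementary but somewhat ad hoc. Your use of~$\KK$ is cleaner: since frozen elements lie in $\ker(\KK)$ (Proposition~\ref{lem_cns}), a hypothetical length-$3$ lcm $\fr{i}u=\fr{j}v$ forces $\KK_u=\KK_v$, and the easy injectivity of~$\KK$ on atoms finishes it. The trade-off is that the paper's argument stays internal to the monoid combinatorics, whereas yours imports the representation~$\KK$ from Section~\ref{secfinquot}; both are legitimate since Proposition~\ref{lem_cns} is already available at this point.
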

\begin{proof}Let $(x_1,y_1)$ and $(x_2,y_2)$ be the distinct frozen pairs such that $\fr{i} = x_1y_1$ and $\fr{j} = x_2y_2$. Note that, by Lemma~\ref{lem_x_ix_j_trivial}, $(y_1,x_1)$ and $(y_2,x_2)$ are also frozen pairs. Let $z_1,z_2,t_1,t_2$ be such that $x_1z_1 = x_2z_2$ and $t_1y_1 = t_2y_2$ are defining relations. Let us first prove that $y_1x_2 = z_1t_2$ and $y_2x_1 = z_2t_1$. We  have  
%$x_1y_1x_2 = x_1 g_{y_1}(x_2)f_{x_2}(y_1)=  g_{x_1}(g_{y_1}(x_2))f_{g_{y_1}(x_2)}(x_1)f_{x_2}(y_1)$.  On the one hand,~$(x_1,y_1)$ is a frozen pair and, therefore, $x_2 = g_{x_1}(g_{y_1}(x_2))$. On the other hand,
$x_1z_1 = x_2z_2$ and, therefore, $x_2 = g_{x_1}(z_1)$ and $x_1= g_{x_2}(z_2)$. As a consequence, we have $g_{y_1}(x_2) = g_{y_1}\circ g_{x_1}(z_1) = z_1$ and $g_{y_2}(x_1) =  g_{y_2}\circ g_{x_2}(z_2) = z_2$.
%$f_{g_{y_1}(x_2)}(x_1) = z_2$. 
Similarly, we get also $f_{x_2}(y_1) = f_{x_2}\circ f_{y_2}(t_2) = t_2$ and $f_{x_1}(y_2) = f_{x_1}\circ f_{y_1}(t_1) = t_1$. Gathering the equalities $g_{y_1}(x_2) = z_1$ and $f_{x_2}(y_1) = t_2$, we get the expected equalities.
% By symmetry between the indices, we also get  $y_2x_1 = z_2t_1$. 
So, we conlude that $\fr{i}\fr{j} = x_1y_1x_2y_2 = x_1z_1t_2y_2 = x_2z_2t_1y_1 = x_2y_2x_1y_1=\fr{j}\fr{i}$. Now consider the right lcm of $\fr{i}$ and $\fr{j}$. It has to left divide $\fr{i}\fr{j}$, that is $x_1y_1x_2y_2$, and to be a right multiple of $\fr{i}$, which is $x_1y_1$. An enumeration of all the representative words of $x_1y_1x_2y_2$ proves that the only ones that have $x_1y_1$ as a prefix are $x_1y_1x_2$ and $x_1y_1x_2y_2$.  As the unique representative word of the frozen element~$x_1y_1$ is the frozen word~$x_1y_1$, it follows that the right lcm of $x_1y_1$ and $x_2y_2$ is either $x_1y_1x_2$ or $x_1y_1x_2y_2$. Now, the element $x_1y_1x_2$ has only three representative words and the frozen word~$x_2y_2$ is a prefix of none of them. Hence, $x_1y_1x_2$ is not a right mutiple of the frozen element~$x_2y_2$. Thus, the right lcm of $x_1y_1$ and $x_2y_2$ is $x_1y_1x_2y_2$. By symmetry, it is also their left lcm.\\
(2) We prove the results by induction on the sum~$m = \sum_{i = 1}^nm_i$. As $M(X,S)$ has no invertible elements, except the identity, the case $m = 0$ is not possible. The case $m = 1$ is trivial since a frozen element has a unique word representative, its associated frozen word. So, assume $m\geq 2$. As the frozen elements commute, we can, up to a permutation of the indices, assume that $m_1>0$. If $i = 1$, the result trivially holds. Assume $i\neq 1$. Then the right lcm of $\fr{1}$ and $\fr{i}$ has to left divide $a$. But this lcm is $\fr{1}\fr{i}$, from $(1)$. By the cancellativity property, we get that $\fr{i}$ left divides~$\fr{1}^{m_1-1}\cdots \fr{n}^{m_n}$, which imposes, by the induction hypothesis, that  $m_i\geq 1$.
\end{proof}

\begin{prop}\label{prop_submonoid_abelian_mod}
Assume $(X,S)$ verifies Property~$\CC$.\\
(1) The action of $G(X,S)$ on itself by conjugation permutes the frozen elements.\\
(2) Let $N^{\scriptscriptstyle +}(X,S)$ be the submonoid of $M(X,S)$ generated by the frozen elements. Then $N^{\scriptscriptstyle +}(X,S)$ is a free Abelian monoid generated by the frozen elements.\\
(3) The right lcm and the left lcm in $M(X,S)$ of any two elements of~$N^{\scriptscriptstyle +}(X,S)$ are equal and belong to~$N^{\scriptscriptstyle +}(X,S)$.
%Moreover, if $v_1,v_2,v_3$ are in $M(X,S)$ with $v_1 = v_2v_3$ and $v_1$ and $v_3$ in $N^{\scriptscriptstyle +}(X,S)$, then $v_2$ is in $N^{\scriptscriptstyle +}(X,S)$.\\
\end{prop}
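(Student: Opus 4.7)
The plan is to handle the three assertions in order, leveraging the commutation and lcm identities for frozen pairs established in Lemmas~\ref{lem_x_ix_j_trivial} and~\ref{lem_importance_conditions_fg_2}.

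For~(1), it suffices to show that conjugation by any atom $x\in X$ sends frozen elements to frozen elements, since $G(X,S)$ is generated by $X\cup X^{-1}$. Lemma~\ref{lem_x_ix_j_trivial}(2) provides, for any $z\in X$ and any frozen $\fr{i}$, an identity $\fr{i}z = z\fr{j}$ in $M(X,S)$ with $\fr{j}$ frozen, equivalently $z^{-1}\fr{i}z = \fr{j}$. Since the set of frozens is finite and this map on them is injective, it is a bijection, so the inverse conjugation also preserves frozens, handling generators of the form $z^{-1}$.

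For~(2), the commutativity $\fr{i}\fr{j} = \fr{j}\fr{i}$ from Lemma~\ref{lem_importance_conditions_fg_2}(1) lets us write every element of $N^{\scriptscriptstyle +}(X,S)$ as $\fr{1}^{m_1}\cdots\fr{n}^{m_n}$ with $m_i\geq 0$. For uniqueness, I would characterize each $m_i$ intrinsically: by Lemma~\ref{lem_importance_conditions_fg_2}(2) combined with cancellativity applied iteratively, $m_i$ equals the largest integer $k$ such that $\fr{i}^k$ left-divides the element. This largest integer is well-defined by noetherianity and depends only on the element, so the representation is unique.

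For~(3), set $c = \fr{1}^{\max(m_1,k_1)}\cdots\fr{n}^{\max(m_n,k_n)}$ where $a = \prod_i\fr{i}^{m_i}$ and $b = \prod_i\fr{i}^{k_i}$. Commutativity makes $c$ both a common right and a common left multiple of $a,b$, so the right lcm $L$ and the left lcm $L'$ of $a,b$ in the Garside monoid $M(X,S)$ exist; moreover $L$ left-divides $c$ and $L'$ right-divides $c$. To prove $L = c$, the pivotal claim is: if $i\neq j$ and $\fr{i}$ left-divides $\fr{j}\cdot e$ in $M(X,S)$, then $\fr{i}$ left-divides $e$. Indeed, $\fr{i}$ and $\fr{j}$ both left-divide $\fr{j}e$, so their right lcm $\fr{j}\fr{i}$ (given by Lemma~\ref{lem_importance_conditions_fg_2}(1)) left-divides $\fr{j}e$, and cancellativity finishes. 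A straightforward induction on exponents upgrades this to: for $j_1,\ldots,j_p$ all distinct from $i$, $\fr{i}^m$ left-divides $\fr{j_1}^{\alpha_1}\cdots\fr{j_p}^{\alpha_p}\cdot e$ if and only if $\fr{i}^m$ left-divides $e$. Since $a$ and $b$ left-divide $L$, so do $\fr{i}^{m_i}$ and $\fr{i}^{k_i}$; as the powers of $\fr{i}$ are totally ordered by left-divisibility, $\fr{i}^{\max(m_i,k_i)}$ left-divides $L$. Iterating the upgraded claim across all indices yields that $c$ left-divides $L$, and antisymmetry forces $L = c$. The mirror-image argument, using the same Lemma~\ref{lem_importance_conditions_fg_2}(1) for the left lcm, gives $L' = c$. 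The main obstacle is the pivotal claim itself: its whole force comes from the fact that $\fr{i}\fr{j}$ is genuinely the right lcm, and not merely a common right multiple, of $\fr{i},\fr{j}$ in $M(X,S)$; without this strengthening of Lemma~\ref{lem_importance_conditions_fg_2}(1) the cancellativity step would collapse.
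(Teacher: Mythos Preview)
Your proposal is correct and follows essentially the same route as the paper's proof: part~(1) via Lemma~\ref{lem_x_ix_j_trivial}(2), part~(2) via commutativity plus Lemma~\ref{lem_importance_conditions_fg_2}(2), and part~(3) via the explicit formula $c=\prod_i\fr{i}^{\max(m_i,k_i)}$ together with Lemma~\ref{lem_importance_conditions_fg_2}(1). The only noteworthy difference is cosmetic: for~(2) the paper cancels down to $\min(p_i,q_i)=0$ and derives a contradiction, whereas you characterise each exponent intrinsically as a maximal divisibility power; and for~(3) the paper simply states the max-formula and leaves the induction to the reader, while you spell out the pivotal claim (that $\fr{i}\mid\fr{j}e$ with $i\neq j$ forces $\fr{i}\mid e$, using that $\fr{i}\fr{j}$ is the genuine right lcm) and the iteration explicitly.
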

\begin{proof} (1) It follows from Lemma~\ref{lem_x_ix_j_trivial}(2) that for every element~$x$  of~$X$ and every frozen element~$\fr{i}$, the element $x\fr{i}x^{-1}$ is a frozen element. Since $X$ generates~the group~$G(X,S)$, the conjugation action of~$G(X,S)$ permutes the frozen elements.\\ (2) By Lemma~\ref{lem_importance_conditions_fg_2}(1) the frozen elements commute. Now, consider an equality~$\fr{1}^{p_1}\cdots \fr{n}^{p_n} = \fr{1}^{q_1}\cdots \fr{n}^{q_n}$ in $M(X,S)$ where $p_i$ and $q_i$ are non negative integers for $i = 1,\ldots, n$. Let us prove that $p_1 = q_1, \ldots, p_n = q_n$. Using the cancellativity property and the commutativity of the frozen elements, we can assume without restriction that $\min(p_1,q_1) = 0, \ldots, \min(p_n,q_n) = 0$. But in this case, Lemma~\ref{lem_importance_conditions_fg_2} imposes that we must have $p_i = q_i = 0$ for every $i$ in~$ 1,\ldots, n$. Hence, the relation is trivial and $N^{\scriptscriptstyle +}(X,S)$ is freely generated as a monoid by the frozen elements.\\
(3) is derived from Lemma \ref{lem_importance_conditions_fg_2}(1) and is closed to the proof of Lemma \ref{lem_importance_conditions_fg_2}(2). One can prove by induction that the left lcm of~$\fr{1}^{p_1}\cdots \fr{n}^{p_n}$ and $\fr{1}^{q_1}\cdots \fr{n}^{q_n}$ is~$\fr{1}^{\max(p_1,q_1)}\cdots \fr{n}^{\max(p_n,q_n)}$. We left the details for the reader.
\end{proof}

\begin{cor}\label{cor_submonoid_abelian}
Assume $(X,S)$ verifies Property~$\CC$.\\
The subgroup $N$ of $G(X,S)$ generated by the frozen elements is a free Abelian group, freely generated by the frozen elements, and a normal subgroup of $G(X,S)$.
\end{cor}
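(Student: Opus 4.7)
The strategy is to deduce the corollary from Proposition~\ref{prop_submonoid_abelian_mod} directly, since most of the work has already been done there. Normality and freeness are essentially separate, so I would treat them one at a time.

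First I would handle normality, which is the easier half. By Proposition~\ref{prop_submonoid_abelian_mod}(1), conjugation by any element~$g\in G(X,S)$ sends each frozen element to a frozen element; in particular it sends the generating set $\{\fr{1},\ldots,\fr{n}\}$ of $N$ to itself. Since conjugation is a group automorphism, this forces $gNg^{-1}\subseteq N$, and thus $N$ is normal in $G(X,S)$.

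Next I would establish that $N$ is free Abelian of rank~$n$. The idea is to show $N$ is the group of fractions of $N^{\scriptscriptstyle +}(X,S)$ computed inside $G(X,S)$, and then observe that this group of fractions is free Abelian. Concretely: by Proposition~\ref{prop_submonoid_abelian_mod}(2), $N^{\scriptscriptstyle +}(X,S)$ is a free Abelian monoid, hence in particular commutative and cancellative (the latter being inherited from $M(X,S)$). By Proposition~\ref{prop_submonoid_abelian_mod}(3), any two of its elements admit a common multiple, so $N^{\scriptscriptstyle +}(X,S)$ satisfies the Ore conditions on both sides. Consequently $N^{\scriptscriptstyle +}(X,S)$ embeds in its (Abelian) group of fractions, and a standard calculation with the Grothendieck group of a free commutative monoid shows that this group of fractions is the free Abelian group of rank~$n$ on the classes of $\fr{1},\ldots,\fr{n}$.

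Finally I would identify this group of fractions with $N$ inside $G(X,S)$. Since $M(X,S)\hookrightarrow G(X,S)$ and $N^{\scriptscriptstyle +}(X,S)\subseteq M(X,S)$, every element of $G(X,S)$ of the form $\fr{i_1}^{\pm 1}\cdots \fr{i_k}^{\pm 1}$ can be rewritten, by commuting and grouping positive and negative exponents, as a fraction $ab^{-1}$ with $a,b\in N^{\scriptscriptstyle +}(X,S)$; conversely every such fraction lies in $N$. So $N$ coincides with the group of fractions of $N^{\scriptscriptstyle +}(X,S)$ and is therefore free Abelian, freely generated by $\fr{1},\ldots,\fr{n}$. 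The only delicate point is checking that the natural map from the abstract group of fractions of $N^{\scriptscriptstyle +}(X,S)$ into $G(X,S)$ is injective, but this follows at once from the cancellativity of $M(X,S)$ and Ore's theorem, which guarantees that the inclusion $N^{\scriptscriptstyle +}(X,S)\hookrightarrow G(X,S)$ extends uniquely to an injection of group of fractions.
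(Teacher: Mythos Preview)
Your proof is correct and shares its skeleton with the paper's: normality via Proposition~\ref{prop_submonoid_abelian_mod}(1), commutativity from the commuting of frozen elements, and freeness deduced from the freeness of $N^{\scriptscriptstyle +}(X,S)$ established in Proposition~\ref{prop_submonoid_abelian_mod}(2). The difference is only in how freeness is extracted. The paper argues directly: a relation $\fr{1}^{m_1}\cdots\fr{n}^{m_n}=1$ in $G(X,S)$ is rewritten, by moving the negative exponents to the other side, as an equality $\fr{1}^{p_1}\cdots\fr{n}^{p_n}=\fr{1}^{q_1}\cdots\fr{n}^{q_n}$ in $M(X,S)$ with $\min(p_i,q_i)=0$, and Proposition~\ref{prop_submonoid_abelian_mod}(2) then forces all exponents to vanish. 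Your route through the group of fractions is a valid repackaging of the same content, but note that the appeal to Proposition~\ref{prop_submonoid_abelian_mod}(3) and Ore's theorem is superfluous: a commutative cancellative monoid automatically satisfies the Ore conditions, and injectivity of the induced map from the Grothendieck group of $N^{\scriptscriptstyle +}(X,S)$ into $G(X,S)$ follows immediately from the injectivity of $N^{\scriptscriptstyle +}(X,S)\hookrightarrow M(X,S)\hookrightarrow G(X,S)$. The paper's argument is shorter; yours makes the universal property explicit.
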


\begin{proof}
Since $N(X,S)$ is generated by the frozen elements, Proposition~\ref{prop_submonoid_abelian_mod}(1) induces that $N$ is normal in~$G(X,S)$. As the frozen elements commute, the group $N$ is commutative. It remains to prove that it is freely generated by the frozen elements. Consider an equality~$\fr{1}^{m_1}\cdots \fr{n}^{m_n} = 1$ in $N$ with the $m_i$ in $\mathbb{Z}$. Then we get an equality~$\fr{1}^{p_1}\cdots \fr{n}^{p_n} = \fr{1}^{q_1}\cdots \fr{n}^{q_n}$ in $M(X,S)$ where $p_i$ and $q_i$ are non negative integers such that $\min(p_i,q_i) = 0$ and $\max(p_i,q_i) = |m_i|$  for $i = 1,\ldots, n$. It follows from Proposition~\ref{lem_importance_conditions_fg_2}(2) that me must have $m_i = p_i = q_i = 0$ for every $i$ in~$ 1,\ldots, n$. Hence, the relation is trivial and $N$ is freely generated by the frozen elements as an Abelian group.
\end{proof}

%%%%%%%%%%%%%%%%%%%%%%%%%%%%%%%%%%%%%%%%%%%%%%%%%%%%%%%%
\subsection{A presentation for~$W(X,S)$ when Property~$\CC$ is verified}
%%%%%%%%%%%%%%%%%%%%%%%%%%%%%%%%%%%%%%%%%%%%%%%%%%%%%%%%
\label{sec:present}
We are now ready to provide a group presentation for the group~$W(X,S)$, assuming Property~$\CC$ is verified.
Let us recall some notations of the previous section. By~$N$ we denote the frozen subgroup of $G(X,S)$ generated by the frozen elements. By~$N^{\scriptscriptstyle +}(X,S)$, we denote the frozen submonoid of $M(X,S)$ generated by the frozen elements. We also recall the exact sequence $$1\to N(X,S)\to G(X,S)\to W(X,S)\to 1.$$
Our first objective is to prove
\begin{prop}\label{label:propfrozensbgp} Assume~$(X,S)$ verifies Property~$\CC$. Then the normal subgroup~$N(X,S)$ is equal to the frozen subgroup~$N$ generated by the frozen elements.
\end{prop}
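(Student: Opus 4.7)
The inclusion $N\subseteq N(X,S)$ is a direct consequence of Proposition~\ref{lem_cns}: each frozen element belongs to $\textrm{Ker}(\KK)$, so the subgroup $N$ it generates does too. The real content of the statement is the reverse inclusion $N(X,S)\subseteq N$, and my plan is to handle it by reducing to the monoid level.

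First, for an arbitrary element $a\in M(X,S)$ I would show by induction on $\ell(a)$ that there exists $a^\flat\in\Div{\Delta}$ with $a\in a^\flat N$ and $\KK_a=\KK_{a^\flat}$. Suppose some word representative of $a$ contains a frozen subword, and write correspondingly $a=a_1\fr{i}a_2$ in $M(X,S)$. The identity $a=(a_1a_2)(a_2^{-1}\fr{i}a_2)$ holds in $G(X,S)$, and the normality of $N$ (Corollary~\ref{cor_submonoid_abelian}) forces $a_2^{-1}\fr{i}a_2\in N$; hence $aN=(a_1a_2)N$. On the other hand, $f_y\circ f_x=\textrm{Id}_X$ for any frozen pair, so $\KK_{\fr{i}}=\textrm{Id}$, and thus $\KK_a=\KK_{a_1a_2}$. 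Since $\ell(a_1a_2)<\ell(a)$, the induction hypothesis applies. If instead no representative of $a$ contains a frozen subword, Proposition~\ref{prop_simple_lcm2}(2) gives $a\in\Div{\Delta}$, and we may take $a^\flat=a$.

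Second, I would combine this with the injectivity of $\KK$ on $\Div{\Delta}$ (Proposition~\ref{prop_clef}(2)). For $a,b\in M(X,S)$ with $\KK_a=\KK_b$, one gets $\KK_{a^\flat}=\KK_{b^\flat}$, hence $a^\flat=b^\flat$, hence $aN=a^\flat N=b^\flat N=bN$; that is, $ab^{-1}\in N$. For a general $g\in N(X,S)$, I would invoke the Ore property of $M(X,S)$ recalled in Section~\ref{sec_bcgd_gars} to write $g=ab^{-1}$ with $a,b\in M(X,S)$. Then $\KK_g=1$ translates into $\KK_a=\KK_b$, and the previous step yields $g\in N$, completing the argument.

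The step requiring the most care is the reduction procedure: the element $a^\flat$ is built by iteratively extracting frozen subwords, and one must check that (i) the process terminates — immediate from the strict decrease in length; (ii) the resulting element lies in $\Div{\Delta}$ and is independent of the choices made — independence follows a posteriori from the injectivity of $\KK$ on $\Div{\Delta}$ combined with the invariance of $\KK_a$ along the reduction; and (iii) the right coset $aN$ is preserved at each reduction step — this is exactly where normality of $N$, furnished by Corollary~\ref{cor_submonoid_abelian}, is essential. Once these points are handled, the rest of the argument is straightforward bookkeeping.
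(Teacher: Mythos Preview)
Your proof is correct and follows essentially the same route as the paper's: reduce to the monoid via fractions, show every monoid element equals a simple element modulo~$N$, and conclude via injectivity of $\KK$ on $\Div{\Delta}$. The paper packages the reduction step as a separate lemma (Lemma~\ref{lem_prepeproofpropfrozensbgp}) giving a monoid-level left decomposition $a=a_1a_2$ with $a_1\in N^{\scriptscriptstyle +}(X,S)$ and $a_2\in\Div{\Delta}$, whereas you phrase it as a right-coset statement $a\in a^\flat N$ obtained by conjugating frozen factors to the right; the two arguments are otherwise interchangeable.
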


Before we proceed, we need the following result.
\begin{lem} \label{lem_prepeproofpropfrozensbgp}Assume~$(X,S)$ verifies Property~$\CC$. Every element~$a$ of~$M(X,S)$ can be decomposed as a product~$a = a_1a_2$ where~$a_1$ lies in~$N^{\scriptscriptstyle +}(X,S)$ and~$a_2$ belongs to~$\Div{\Delta}$.
\end{lem}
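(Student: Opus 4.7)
The plan is to argue by induction on the length function~$\ell(a)$ in $M(X,S)$. The base case, and more generally any~$a$ that already lies in~$\Div{\Delta}$, is settled by the trivial decomposition~$a = 1\cdot a$, using that $1\in N^{\scriptscriptstyle +}(X,S)$.

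For the induction step, suppose $a\notin\Div{\Delta}$, so in particular $\ell(a)\geq 2$. By Proposition~\ref{prop_simple_lcm2}(2), some word representative of~$a$ contains a frozen word as a subword, so I can fix a factorisation $a = u\cdot xy\cdot v$ in~$M(X,S)$ with $xy$ a frozen word and $u = z_1\cdots z_k$ a word on~$X$. The main idea is to push the frozen element $xy$ past the letters of $u$ from right to left. By Proposition~\ref{prop_submonoid_abelian_mod}(1), conjugation by $z_k^{-1}$ carries the frozen element $xy$ to another frozen element~$\theta^{(k)}$, yielding an equality $z_k\cdot xy = \theta^{(k)}\cdot z_k$ in~$M(X,S)$. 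Iterating this with $z_{k-1},\ldots,z_1$ produces a sequence of frozen elements $\theta^{(k)},\theta^{(k-1)},\ldots,\theta^{(1)}$ and an equality $a = \theta^{(1)}\cdot uv$ in~$M(X,S)$, where the element~$uv$ has length~$\ell(a)-2$.

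The induction hypothesis applied to~$uv$ then produces a decomposition $uv = a_1' a_2'$ with $a_1'\in N^{\scriptscriptstyle +}(X,S)$ and $a_2'\in\Div{\Delta}$. Setting $a_1 = \theta^{(1)} a_1'$ and $a_2 = a_2'$, the factorisation $a = a_1\cdot a_2$ is of the required form, since $a_1$ is a product of frozen elements and hence belongs to~$N^{\scriptscriptstyle +}(X,S)$ by definition of the latter as a submonoid.

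The main point that requires care is the legitimacy of the iterative ``push to the left'' manipulation: at each stage one needs the existence of a frozen element~$\theta^{(i)}$ such that $z_i\cdot \theta^{(i+1)} = \theta^{(i)}\cdot z_i$ in~$M(X,S)$. This is precisely where Property~$\CC$ is needed, through Proposition~\ref{prop_submonoid_abelian_mod}(1) (which in turn rests on Lemma~\ref{lem_x_ix_j_trivial}(2)); once that step is granted, the rest of the argument is a direct induction.
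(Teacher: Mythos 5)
Your proof is correct and follows essentially the same route as the paper: induction on length, extraction of a frozen subword via Proposition~\ref{prop_simple_lcm2}(2), and transport of the frozen element to the front using the conjugation action of Proposition~\ref{prop_submonoid_abelian_mod}(1). The paper performs the "push to the left" in a single invocation of that proposition rather than letter by letter, but the content is identical.
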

\begin{proof} We prove the result by induction on~$\ell_\sX(a)$. If $a = 1$, there is nothing to prove. Assume~$\ell_\sX(a) \geq 1.$ If $a$ belongs to~$\Div{\Delta}$, we can take $a_1 = 1$ and $a_2 = a$. If $a$ is not in $\Div{\Delta}$, then, by Proposition~\ref{prop_simple_lcm2}(2), we can write $a = a'xya''$ with $a',a''$ in $M(X,S)$ and $xy$ a frozen word. Then, by Proposition~\ref{prop_submonoid_abelian_mod}, there exists a frozen word $x'y'$ so that $a = x'y'a'a''$. Moreover, $\ell_\sX(a'a'')< \ell_\sX(a)$. By the induction hypothesis we can write $a'a'' = a'_1a_2$ with~$a'_1$ is in~$N^{\scriptscriptstyle +}(X,S)$ and~$a_2$ in~$\Div{\Delta}$. Then we have $a = a_1a_2$ where $a_2$ lies in~$\Div{\Delta}$ and $a_1$ is equal to $x'y'a'_1$ and, therefore, belongs to~$N^{\scriptscriptstyle +}(X,S)$.
\end{proof}

\begin{proof}[Proof of Proposition~\ref{label:propfrozensbgp}] We first remark that $N$ is a subgroup of $N(X,S)$, because $N$ is generated by the frozen elements, that belong to $N(X,S)$ by Proposition~\ref{lem_cns}. Conversely, let $a$ be in $G(X,S)$ that belongs to the kernel of~$\KK$. As $G(X,S)$ is the group of fraction of $M(X,S)$, there exists $b,c$ in $M(X,S)$ so that $a = bc^{-1}$. By Lemma~\ref{lem_prepeproofpropfrozensbgp}, we can write $b = b_1b_2$ and $c = c_1c_2$ with $b_1,c_1$ in~$N^{\scriptscriptstyle +}(X,S)$ and~$b_2,c_2$ in~$\Div{\Delta}$. Then we have $\KK_a = \KK_b\KK_{c^{-1}} = \KK_{b_1}\KK_{b_2}\KK_{c_2^{-1}}\KK_{c_1^{-1}} = \KK_{b_2}\KK_{c_2^{-1}}$. It follows that  $\KK_{b_2} = \KK_{c_2}$. By Proposition~\ref{prop_clef}(2), we get $b_2 = c_2$. Thus, $a = b_1c_1^{-1}$ and $a$ belongs to $N$. Hence, $N(X,S)\subseteq N$ and, finally, $N(X,S) = N$.  \end{proof}
We recall that by~$\XX$ we denote the set~$\{\KK_x\mid x\in X\}$.
\begin{cor} \label{main_th} If $(X,S)$ verifies Property~$\CC$, then\\
(1) The group~$W(X,S)$ has the group presentation
\begin{equation}\left\langle\ \XX\ \left|\ \begin{array}{lcl}\KK_x\KK_y = \KK_z\KK_t&;&x,y\in X, S(x,y) = (z,t)\neq (x,y)\\\KK_x\KK_y=1&;& x,y\in X, S(x,y) = (x,y) \end{array} \right.\right\rangle\label{equation:structuregroup2}\end{equation}
(2)  $W(X,S)$ is a finite group of order $2^n$, where $n$ is the cardinality of~$X$.\\
 \end{cor}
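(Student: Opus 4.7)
The plan is to handle the two parts in reverse order, since part~(2) follows almost immediately from the earlier propositions while part~(1) requires just a little more care.

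For part~(2), I would invoke Proposition~\ref{lem_cns}: under Property~$\CC$, the group~$W(X,S)$ is a generating generated section for~$M(X,S)$, which by definition means $\lDiv{\sXX}{\KK_\Delta} = W(X,S)$. Combined with Proposition~\ref{prop_clef}(3), this yields $W(X,S) = \KK(\Div{\Delta})$. Since Proposition~\ref{prop_clef}(2) asserts that the restriction of~$\KK$ to~$\Div{\Delta}$ is injective, we get $|W(X,S)| = |\Div{\Delta}|$, and Proposition~\ref{prop_nombre_simple}(2) identifies this common value with~$2^n$.

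For part~(1), I would exploit the exact sequence of Proposition~\ref{label:propfrozensbgp},
\begin{equation*}
1 \to N \to G(X,S) \to W(X,S) \to 1,
\end{equation*}
where~$N$ is the subgroup of~$G(X,S)$ generated by the frozen elements. The starting point is the defining presentation of~$G(X,S)$ given in Definition~\ref{def_struct_gp}, with generating set~$X$ and defining relations $xy = g_x(y) f_y(x)$ for each non-frozen pair~$(x,y)$. Since $W(X,S) = G(X,S)/N$, a presentation of~$W(X,S)$ is obtained by adjoining to the presentation of~$G(X,S)$ a \emph{normal} generating set of~$N$. By Corollary~\ref{cor_submonoid_abelian}, the subgroup~$N$ is normal in~$G(X,S)$, so the frozen elements themselves (which generate~$N$ as a subgroup) already form a normal generating set. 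Adjoining the relators~$xy = 1$ for each frozen pair, and renaming each~$x$ as its image~$\KK_x$ under the projection~$\KK$, produces precisely the presentation displayed in~(\ref{equation:structuregroup2}).

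I expect the whole argument to be essentially bookkeeping on top of the earlier propositions; the only step worth emphasising is that the subgroup generated by the frozen elements coincides with their normal closure in~$G(X,S)$, which is an immediate consequence of the normality of~$N$ recorded in Corollary~\ref{cor_submonoid_abelian}. Without this observation, one would only obtain a presentation of~$W(X,S)$ after throwing in all conjugates of the frozen relators, but here no further relators are needed.
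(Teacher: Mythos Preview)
Your proposal is correct and follows essentially the same approach as the paper's proof: part~(1) via Proposition~\ref{label:propfrozensbgp} together with the defining presentation of~$G(X,S)$, and part~(2) via Proposition~\ref{lem_cns} combined with Proposition~\ref{prop_nombre_simple}. Your write-up is in fact more explicit than the paper's on two points the authors leave implicit: the injectivity of~$\KK$ on~$\Div{\Delta}$ (Proposition~\ref{prop_clef}(2)) needed to equate $|W(X,S)|$ with $|\Div{\Delta}|$, and the observation that normality of~$N$ (Corollary~\ref{cor_submonoid_abelian}) is what allows the frozen relators alone, rather than all their conjugates, to present the quotient.
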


\begin{proof}
Point~(1) is a direct consequence of Proposition~\ref{label:propfrozensbgp} and of the defining presentation of $G(X,S)$, given in Definition~\ref{def_struct_gp}. The cardinality of $\Div{\Delta}$ is equal to $2^n$ by Proposition~\ref{prop_nombre_simple}. Therefore the cardinality of $W(X,S)$ is~$2^n$ by Proposition~\ref{lem_cns}.
\end{proof}

\begin{ex} Assume $X = \{x_1,x_2\}$ so that $S(x_i,x_j) = (x_j,x_i)$ for $i,j = 1,2$. Then $M(X,S)$ has the presentation~$\langle x_1,x_2\mid x_1x_2 = x_2x_1\rangle$ and $W(X,S) = \langle \xx_1,\xx_2\mid \xx_1^2 = \xx_2^2 = 1 ; \xx_1\xx_2 = \xx_2\xx_1\rangle$ where $\xx_1 = \KK_{x_1}$ and $\xx_2 = \KK_{x_2}$.
\end{ex}

When Property~$\CC$ is not verified, then the cardinality of $W(X,S)$ is not necessarily equal to~$2^n$ as it is shown by the following example.
\begin{ex} \label{ex:pasCC}
Let $X = \{1,2,3,4\}$ and consider the group define by the following presentation
$$\left\langle x_1,x_2,x_3,x_4\left| \begin{array}{ccc}x_1x_2=x_3x_1&;&x_2x_2 = x_4x_3\\x_1x_3=x_4x_1&;&x_3x_3 = x_2x_4\\x_1x_4=x_2x_1&;&x_4x_4 = x_3x_2\end{array}\right.\right\rangle$$
It is easy to check that this group is a group of $I$-type since this is the envelopping group of a monoid of $I$-type (Definition~\ref{def:Itype}). We have~$x_1^2x_2 = x_4x_1^2$ and~$x_1^2$ is a frozen word. Hense, Property~$\CC$ is not verified. Now, considered as a subgroup of the group of permutations of~$\{\pm1,\pm2,\pm3,\pm4\}$, the maps $\psi_1,\psi_2,\psi_3$ and $\psi_4$ are equal to $(1,-1)(2,3,4)(-2,-3,-4)$, $(2,-4,-3,-2,4,3)$, $(2,4,3,-2,-4,-3)$ and $(2,4,-3,-2,-4,3)$ respectively. One can check (using GAP for instance) that the cardinality of $W(X,S)$ is $48$, that is $2^4\times 3^2$. One can also check that the centre of $W(X,S)$ has four elements and is generated by the two elements~$(1,-1)$ and~$(2,-2)(3,-3)(4,-4)$.
\end{ex}

We conclude with some extra properties of the group~$W(X,S)$, showing this group shares several properties with Coxeter groups. For $\xx,\yy$ in $\XX$, we shall say that $(\xx,\yy)$ is a frozen pair when $\yy = \xx^{-1}$, that is when $(x,y)$ is a frozen pair. By $\SSS: \XX\times\XX\to\XX\times\XX$, we denote the map induced by the map~$S:X\times X\to X\times X$.  
\begin{cor} Assume~$(X,S)$ verifies Property~$\CC$.
(1) The group~$W(X,S)$ contains a unique element~$w_0$ of maximal length on~$\XX$. This element is $\phi(\Delta)$. Its length on~$\XX$ is $n$.\\
(2) The order of $w_0$ is two, and the conjugation action of $w_0$ permutes the elements of $\XX$.\\
\end{cor}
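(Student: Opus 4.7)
For Part~(1), the strategy is to transport the length structure on~$\Div{\Delta}$ to~$W(X,S)$. Under Property~$\CC$, Proposition~\ref{lem_cns} gives $W(X,S)=\KK(\Div{\Delta})$, and Proposition~\ref{prop_clef}(1),(2) shows that the restriction $\KK|_{\Div{\Delta}}$ is injective and satisfies $\ell_\sXX(\KK_a)=\ell_\sX(a)$. Hence $\KK$ restricts to a length-preserving bijection $\Div{\Delta}\to W(X,S)$. By Proposition~\ref{prop_nombre_simple}, the simple elements of~$M(X,S)$ have $X$-length at most~$n$ with~$\Delta$ as the unique element realising this maximum, so $w_0:=\KK_\Delta$ is the unique element of~$W(X,S)$ of maximal $\XX$-length, and this length equals~$n$.

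For the first half of Part~(2), the key observation is that~$\XX$ is closed under inversion. For $x\in X$, let $y\in X$ be the frozen partner of~$x$ (so that $(x,y)$ is a frozen pair); Proposition~\ref{lem_cns} gives $\KK_x\KK_y=1$, whence $\KK_x^{-1}=\KK_y\in\XX$. Fixing a reduced expression $w_0=\xx_1\cdots\xx_n$ on~$\XX$, its inverse $w_0^{-1}=\xx_n^{-1}\cdots\xx_1^{-1}$ exhibits $w_0^{-1}$ as a product of~$n$ elements of~$\XX$, so $\ell_\sXX(w_0^{-1})\leq n$. The reverse inequality is symmetric, giving $\ell_\sXX(w_0^{-1})=n$; by the uniqueness established in Part~(1), $w_0^{-1}=w_0$, i.e.~$w_0^2=1$.

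For the remaining assertion I will use the Garside-automorphism property: conjugation by~$\Delta$ in~$G(X,S)$ sends atoms to atoms. Fix $x\in X$. Since $x$ right-divides~$\Delta$, write $\Delta=\Delta'_x\cdot x$ with $\Delta'_x\in\Div{\Delta}$ of $X$-length $n-1$. The balancedness of~$\Delta$ ensures that $\Delta'_x$ is also a right divisor of~$\Delta$, so $\Delta=z\cdot\Delta'_x$ for some $z\in M(X,S)$; length additivity forces $\ell_\sX(z)=1$, hence $z\in X$. Equating the two factorisations yields $\Delta'_x x=z\Delta'_x$, and therefore $\Delta x=\Delta'_x x\cdot x=z\Delta'_x\cdot x=z\Delta$, that is, $\Delta x\Delta^{-1}=z\in X$ in~$G(X,S)$. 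Applying~$\KK$ gives $w_0\KK_x w_0^{-1}=\KK_z\in\XX$, and since $\XX$ is finite and conjugation is a bijection of~$W(X,S)$, this conjugation restricts to a permutation of~$\XX$. The Garside-automorphism step is the one genuinely delicate point, but it follows directly from the balancedness of~$\Delta$ together with the additivity of the $X$-length on~$M(X,S)$, both of which are built into the setting of monoids of~$I$-type.
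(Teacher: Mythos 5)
Your proof is correct and follows essentially the same route as the paper's: uniqueness of the maximal-length element via the length-preserving bijection $\KK|_{\Div{\Delta}}$ onto $W(X,S)$ (using Propositions~\ref{lem_cns}, \ref{prop_clef} and \ref{prop_nombre_simple}), the involution property via $\XX=\XX^{-1}$ together with that uniqueness, and the permutation property via $\Delta X=X\Delta$. The only difference is that you actually derive $\Delta X=X\Delta$ from balancedness and length additivity, whereas the paper simply invokes it as a standard property of the Garside element.
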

\begin{proof} Let $w$ belong to~$W(X,S)$. By Proposition~\ref{lem_cns}, there exists~$g$ in $\Div{\Delta}$ so that $\psi_g = w$. The length of $g$ on $X$ is at most~$n$ by Proposition~\ref{prop_simple_lcm1}, and the length of $g$ is equal to~$n$ if and only if $g = \Delta$. Thus, by Proposition~\ref{prop_clef}(1), the length on~$\XX$ of any element of $W(X,S)$ is at most $n$, and $\psi(\Delta)$ is the unique element of length~$n$ on $\XX$. For every $x$ in $X$, there exists (a unique) $y$ in $X$ so that $(x,y)$ is a frozen pair. By Property~$\CC$ we have $\KK_x\circ\KK_y = \KK_{xy} = 1$. Therefore, we have $\XX = \XX^{-1}$ in $W(X,S)$. This imposes that any element of $W(X,S)$ has the same length on $X$ as its inverse. But $w_0$ is the unique element of length~$n$. Therefore $w_0 = w_0^{-1}$, that is $w_0^2 = 1$. Moreover, Since $\Delta$ is a (actually \emph{the}) Garside element, we have $\Delta X = X\Delta$. Thereby, $w_0\XX w_0^{-1} = \XX$.
\end{proof}

\begin{prop}Assume~$(X,S)$ verifies Property~$\CC$.
 (1) For every $w$ in $W(X,S)$ and every $x$ in $\XX$, one has $\ell_\sXX(xw) = \ell_{\sXX}(w)\pm 1$. Moreover, if $\ell_\sXX(xw) = \ell_{\sXX}(w) - 1$, then there exists $w_1$ in $W(X,S)$ so that $w = \xx^{-1}w_1$ and $\ell_\sXX(w_1) = \ell_\sXX(w)-1$.\\
(2) Assume $w$ lies in $W(X,S)$ and $\xx,\yy$ lie in $\XX$ so that $\ell_\sXX(\xx w) = \ell_\sXX(w\yy) = \ell_\sXX(w)+1$ and $\ell_\sXX(\xx w\yy) = \ell_\sXX(w)$. Assume $w = \KK_{z_1}\cdots \KK_{z_k}$ with $z_1,\ldots,z_k$ in $X$ and  $k = \ell_\sXX(w)$. Then, there exist $y_1,\ldots,y_{k+1}$ in $X$ and $x_0,x_1,\ldots,x_k$ in $X$ so that $\KK_{x_0} = \xx$, $\KK_{y_{k+1}} = \yy$, $\KK_{z_i}\KK_{y_{i+1}} = \KK_{y_i}\KK_{x_i}$,  no $(z_i,y_{i+1})$ is a frozen pair  and $\psi_{y_1} = \xx^{-1}$. In particular, $\xx w\yy = \KK_{x_1}\cdots\KK_{x_k}$.

\end{prop}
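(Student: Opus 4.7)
The plan is to reduce both assertions to their Garside-monoid analogues in Proposition~\ref{prop_simple_lcm2}, and then to transfer them through the bijection $\Div{\Delta}\simeq W(X,S)$ supplied by Proposition~\ref{lem_cns}, together with the length identity $\ell_\sX(a)=\ell_\sXX(\KK_a)$ for simple $a$ recorded in Proposition~\ref{prop_clef}(1). Property~$\CC$ is what makes both ingredients simultaneously available.

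For~(1), I would lift $w$ to a simple element $a\in\Div{\Delta}$ with $\KK_a=w$ and $\ell_\sX(a)=\ell_\sXX(w)$, and pick $x_0\in X$ with $\KK_{x_0}=\xx$. The dichotomy is then ``$x_0a\in\Div{\Delta}$ or not''. In the first case, Proposition~\ref{prop_clef}(1) applied to $x_0a$ gives $\ell_\sXX(\xx w)=\ell_\sX(x_0a)=\ell_\sXX(w)+1$. In the second case, Proposition~\ref{prop_simple_lcm2}(1) produces $y\in X$ and $b\in\Div{\Delta}$ with $a=yb$ and $(x_0,y)$ a frozen pair; Property~$\CC$ collapses $\KK_{x_0}\KK_y$ to the identity, so $\xx w=\KK_b$ and $\ell_\sXX(\xx w)=\ell_\sX(b)=\ell_\sXX(w)-1$. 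The ``moreover'' clause is then automatic by setting $w_1=\xx w$.

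For~(2), I would fix $x_0,y_{k+1}\in X$ with $\KK_{x_0}=\xx$ and $\KK_{y_{k+1}}=\yy$, and consider $a=z_1\cdots z_k$; the equality $\ell_\sX(a)=k=\ell_\sXX(w)$ combined with Proposition~\ref{prop_clef}(1) forces $a\in\Div{\Delta}$. Reading the three hypothesized length equalities through Proposition~\ref{prop_clef}(1) translates them into $x_0a,ay_{k+1}\in\Div{\Delta}$ and $x_0ay_{k+1}\notin\Div{\Delta}$. This is exactly the setup of Proposition~\ref{prop_simple_lcm2}(3), whose conclusion supplies intermediate $y_1,\ldots,y_k\in X$ and $x_1,\ldots,x_k\in X$ with $S(z_i,y_{i+1})=(y_i,x_i)$, no $(z_i,y_{i+1})$ frozen, $(x_0,y_1)$ frozen, and the monoidal identity $ay_{k+1}=y_1x_1\cdots x_k$. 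Applying $\KK$ turns each $S$-equation into $\KK_{z_i}\KK_{y_{i+1}}=\KK_{y_i}\KK_{x_i}$; Property~$\CC$ applied to the frozen pair $(x_0,y_1)$ yields $\KK_{y_1}=\xx^{-1}$; and $\KK$ applied to $ay_{k+1}=y_1x_1\cdots x_k$ becomes $w\yy=\xx^{-1}\KK_{x_1}\cdots\KK_{x_k}$, that is $\xx w\yy=\KK_{x_1}\cdots\KK_{x_k}$.

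The main technical point will be the careful bookkeeping that converts the three length equalities in~(2) into the three membership/non-membership statements for $x_0a$, $ay_{k+1}$ and $x_0ay_{k+1}$ in $\Div{\Delta}$; once Proposition~\ref{prop_clef}(1) is set up as a dictionary between length in $W(X,S)$ and length in $M(X,S)$ for simple elements, the rest is a verbatim translation of the monoidal picture to the quotient $W(X,S)$.
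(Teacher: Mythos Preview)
Your proposal is correct and is precisely the argument the paper has in mind when it writes that (1) and (2) are ``direct consequences of Proposition~\ref{prop_simple_lcm2}''; you have simply unpacked the lifting through $\Div{\Delta}\simeq W(X,S)$ that the paper leaves implicit. The only place to tighten is the claim that $\ell_\sX(a)=\ell_\sXX(\KK_a)$ \emph{forces} $a\in\Div{\Delta}$: Proposition~\ref{prop_clef}(1) gives only the forward implication, so you also need Proposition~\ref{prop_simple_lcm2}(2) together with the frozen-word cancellation coming from Property~$\CC$ (exactly as in the converse half of the proof of Proposition~\ref{lem_cns}).
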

\begin{proof} (1) and (2) are  direct consequences of Proposition~\ref{prop_simple_lcm2}.
\end{proof}
\begin{rem}
The group $W(X,S)$ is  a 2-group with order equal to~$2^n$, where $n$ is the cardinality of $X$. So, $W(X,S)$ is nilpotent and it has nilpotency class at most $n-1$. It is never cyclic,  because it is Abelian if and only if $W(X,S)$ is a trivial solution, and in this later case $W(X,S)$ is isomorphic to~$(\mathbb{Z}_2)^n$.
\end{rem}

\begin{ex}(1) Consider Example \ref{exemple:exesolu_et_gars}. The exponent of $W(X,S)$ is $2^3$ and its nilpotency class is~$3$. \\
(2) Consider the trivial solution $(X,S)$ with $X$ of cardinality $n$, the structure group of $(X,S)$ is the free Abelian group on $n$ generators and the finite quotient group $W(X,S)$ is $(\mathbb{Z}_2)^n$. Its nilpotency class is $1$ and its exponent is $2$.\\
(3) consider the following \emph{almost trivial} solution $(X,S)$ with $X = \{1,\ldots,6\}$ and $g_i=f_i=Id_X$ for $1 \leq i \leq 4$ and $f_5=f_6=g_5=g_6=(5,6)$. The structure group is isomorphic to~$(\mathbb{Z})^4 \times\langle x_5,x_6 \mid x_5^2=x_6^2\rangle$ and the finite quotient group $W(X,S)$ is $(\mathbb{Z}_2)^4\times\mathbb{Z}_4$. Its nilpotency class is $1$ and its exponent is $4$.\end{ex}
  
%--------------------------

%%%%%%%%%%%%%%%%%%%%%%%%%%%%%%%%%%%%%%%%%%%%%%%%%%%%%%%%%%%%%%%%
\bigskip\bigskip\noindent
{ Fabienne Chouraqui,}

\smallskip\noindent
Technion, Haifa, Israel.
\smallskip\noindent
E-mail: {\tt fabienne@tx.technion.ac.il}

\bigskip\bigskip\noindent
{ Eddy  Godelle,}

\smallskip\noindent
Universit\'e de Caen, UMR 6139 du CNRS, LMNO, Campus II, 14032 Caen cedex, France.
\smallskip\noindent
E-mail: {\tt eddy.godelle@unicaen.fr}
\end{document}